 \newtheorem{theo}{Theorem}[section]
\newtheorem{cor}[theo]{Corollary}
\newtheorem{lemm}[theo]{Lemma}
\newtheorem{prop}[theo]{Proposition}
\theoremstyle{definition}
\newtheorem{defi}[theo]{Definition}
\theoremstyle{definition}
\newtheorem{ex}[theo]{Example}
\numberwithin{equation}{section}
 \newcommand{\A}{\mathcal{A}}
 \newcommand{\J}{\mathcal{J}}
\begin{document}

\title{Module Pseudo-amenability of Banach algebras}

\author[A. Bodaghi]{Abasalt Bodaghi}
\address{Department of Mathematics, Garmsar Branch, Islamic Azad University,  Garmsar,
IRAN}

 \email{abasalt.bodaghi@gmail.com}
\author[A. Jabbari]{Ali Jabbari}
\address{Young Researchers and Elite Club, Ardabil Branch, Islamic Azad University, Ardabil, IRAN}
\email{jabbari\_al@yahoo.com}

\subjclass[2010]{Primary 46H20; Secondary 46H25, 16E40.}

\keywords{ Approximately inner; Module approximate amenability; Module derivation;  Module pseudo-amenability}

\dedicatory{}

\smallskip

\begin{abstract}
The notions of module pseudo-amenable and module pseudo-contractible Banach algebras are introduced. For a Banach algebra with bounded approximate identity, module pseudo-amenability and module approximate amenability are the same properties. It is given a complete characterization of module pseudo-amenability for a Banach algebra. For every inverse semigroup $S$ with subsemigroup $E$ of idempotents, necessary and sufficient conditions are obtained for the  $\ell^1(S)$ and its second dual to be $\ell^1(E)$-module pseudo-amenable.

\end{abstract}

 \maketitle


\section{introduction}
Ghahramani and Zhang in \cite{gz} introduced two new notions of Johnson's amenablility without boundedness; pseudo-amenability and pseudo-contractibility. They compared these notions with some of those that were investigated earlier in \cite{ga1} and \cite{ga2}. They also studied pseudo-amenability and pseudo-contractibility of Banach algebras associated to locally compact groups such as group algebras, measure
algebras and Segal algebras. Indeed, for a locally ompat group $G$, the group algebra $L^1(G)$ is pseudo-amenable if and only if $G$ is amenable.

The concept of module amenability for a class of Banach
algebras which is in fact a generalization of the amenability has been developed by Amini in \cite{am1}. He showed that for every inverse semigroup $S$ with subsemigroup $E$ of idempotents, the $\ell^1(S)$-module amenability of $\ell^1(S)$ is equivalent to the amenability
of $S$. This concept was modified in \cite{boda1} and \cite{boda2}, by using module homomorphisms between Banach algebras. In \cite{bp}, Pourmahmood and Bodaghi introduced the notions of module approximate amenability and contractibility of Banach algebras that are modules over another Banach algebra. They proved that $\ell^1(S)$ is module approximately amenable (contractible) if and only if $S$ is amenable (for the module character amenability and the permanent weak module amenability of inverse semigroup algebras refer to \cite{bod2} and \cite{bod3}, respectively).

In the current work, we define the concepts of module pseudo-amenability and module pseudo-contractibility for Banach algebras. We characterize module approximately amenable in different way with \cite{bp}; through vanishing of the first module cohomology groups $H^1_{\mathfrak
A}(\mathcal A, X^{**})$ for certain Banach $\mathcal A$-$\mathfrak A$-bimodules $X$. We also show that module pseudo-amenability of a Banach algebra is equivalent to existence of a module approximate morphism. It is proved that a module pseudo-amenable Banach
algebra is in fact module approximately  amenable under a condition. It is shown that module pseudo-amenability of $\mathcal A^{**}$ implies module pseudo-amenability of $\mathcal A$. As consequences, we deduce that when $\ell^{1}(E)$ acts on
$\ell^{1}(E)$ trivially from left and by multiplication from
right, the semigroup algebra $ \ell ^{1}(S)$ is
$\ell^{1}(E)$-module pseudo-amenable if and only if $S$ is amenable. Also, $\ell ^1(S)^{**}$ is module pseudo-amenable (with respect to
the above actions)  if and only if the maximal group homomorphic image
$G_S=S/\approx$ is finite, where $s\approx t$ whenever
$\delta_s-\delta_t$ belongs to the closed linear span of the set
$\{\delta_{set}-\delta_{st}: s,t\in S, e\in E\}.$



\section{Notation and preliminary results}

Let $\mathcal A$ be a Banach algebra, and let $X$ be a Banach $\mathcal A$-bimodule. A bounded (continuous)
linear map $D: \mathcal A \longrightarrow X$ is called a {\it derivation} if
$$D(ab)=D(a)\cdot b+a\cdot D(b) \qquad (a,b \in \mathcal A).$$

For each $x\in X$, the derivation ${\rm{ad}}_x: \mathcal A \longrightarrow X$ defined by
${\rm{ad}}_x(a)=a\cdot x-x\cdot a$ is called {\it inner}. A derivation $D: \mathcal A
\longrightarrow X$ is said to be {\it approximately inner} if there exists a net $(x_i)\subseteq X$ such that
$D(a)=\lim_i(a\cdot x_i-x_i\cdot a)$ for all $a \in \mathcal A.$
Hence $D$ is approximately inner if it is in the closure of the
set of inner derivations with respect to the strong operator
topology on $\bf B(\mathcal A)$, the space of bounded linear operators on $\mathcal A$. The Banach algebra $\mathcal A$ is {\it approximately amenable} ({\it contractible}) if every bounded derivation $D: \mathcal A \longrightarrow X^*$ ($D: \mathcal A \longrightarrow X$) is
approximately inner, for each Banach $\mathcal A$-bimodule $X$ \cite{ga1}, where $X^*$ denotes the first dual space of $X$ which is a Banach $\mathcal A$-bimodule in the canonical way. We use the qualifier $w^*$ when that convergence is in the appropriate
weak$^*$-topology.

Let ${\mathcal A}$ and ${\mathfrak A}$ be Banach algebras such
that ${\mathcal A}$ is a Banach ${\mathfrak A}$-bimodule with
compatible actions as follows:
$$ \alpha\cdot(ab)=(\alpha\cdot a)b,
\quad(ab)\cdot\alpha=a(b\cdot\alpha) \qquad(a,b \in {\mathcal
A},\alpha\in {\mathfrak A}).$$

Let ${X}$ be a left Banach ${\mathcal A}$-module and a
Banach ${\mathfrak A}$-bimodule with the following compatible actions:
$$\alpha\cdot(a\cdot x)=(\alpha\cdot a)\cdot x,
\,\,a\cdot(\alpha\cdot x)=(a\cdot\alpha)\cdot x, \,\, a\cdot
(x\cdot\alpha)=(a\cdot x)\cdot \alpha \qquad (a \in{\mathcal
A},\alpha\in {\mathfrak A},x\in{X}).$$

Then we say that ${X}$ is a {\it{left Banach ${\mathcal A}$-${\mathfrak A}$-module}}.
Right Banach $\mathcal A$-$\mathfrak A$-modules and (two-sided) Banach $\mathcal A$-$\mathfrak A$-modules are defined similarly.
If $\alpha\cdot x=x\cdot\alpha $ for all $\alpha\in
{\mathfrak A}$ and $x\in{ X} $, then $ X $ is called a {\it
commutative} left (right or two-sided) Banach ${\mathcal A}$-${\mathfrak A}$-module. If $X $ is a
(commutative) Banach ${\mathcal A}$-${\mathfrak A}$-module, then so
is $X^*$, where the actions of $\mathcal A$ and ${\mathfrak A}$
on $ X^*$ are defined as usual \cite{am1}.

 Note that in general, ${\mathcal A}$ is not an ${\mathcal A}$-${\mathfrak
A}$-module because ${\mathcal A}$ does not satisfy the
compatibility condition $a\cdot(\alpha\cdot
b)=(a\cdot\alpha)\cdot b$ for $\alpha\in {\mathfrak A}, a,b
\in{\mathcal A}$. But when $\mathcal A$ is a commutative
$\mathfrak A$-module and acts on itself by multiplication from
both sides, then it is also a Banach ${\mathcal A}$-${\mathfrak
A}$-module.

Let ${\mathcal A}$ and ${\mathfrak A}$ be as above
and $X$ be a Banach ${\mathcal A}$-$\mathfrak A$-module. A ($\mathfrak A$-){\it
module derivation} is a bounded (continuous) map $D: \mathcal A \longrightarrow X $
satisfying
$$D(a\pm b)=D(a)\pm D(b), \quad D(ab)=D(a)\cdot b+a\cdot D(b) \qquad
(a,b \in \mathcal A),$$ and
$$D(\alpha\cdot a)=\alpha\cdot D(a), \quad D(a\cdot\alpha)=D(a)\cdot\alpha
\qquad (a \in{\mathcal A},\alpha\in {\mathfrak A}).$$

 Note that $D: \mathcal A \longrightarrow X $ is bounded if
there exists $M>0$ such that $\|D(a)\|\leq M\|a\| $, for
each $a \in\mathcal A$.  When $X $ is commutative $\mathfrak A$-module, each
$x \in X $ defines a module derivation which is inner as follows:
$$D_x(a)=a\cdot x-x\cdot a \qquad (a \in{\mathcal A}).$$

Consider the module projective tensor product ${\mathcal
A}\widehat{\otimes} _{\mathfrak A} {\mathcal A}$ which is isomorphic
to the quotient space $(\mathcal A \widehat{\otimes} \mathcal
A)/{\mathcal I_{\mathcal A}}$, where $\mathcal I_{\mathcal A}$ is the closed linear span of $\{ a\cdot\alpha \otimes b-a
\otimes\alpha \cdot b : \alpha\in {\mathfrak A},a,b\in{\mathcal A}\}$. Also consider the closed ideal
$\mathcal J_{\mathcal A}$ of ${\mathcal A}$ generated by elements of the form $
(a\cdot\alpha) b-a(\alpha \cdot b)$ for $ \alpha\in {\mathfrak
A},a,b\in{\mathcal A}$. We shall denote $\mathcal I_{\mathcal A}$ and
$\mathcal J_{\mathcal A}$ by $\mathcal I$ and $\mathcal J$, respectively, if there is no risk of confusion. Then $\mathcal I$ and $\mathcal J$ are ${\mathcal A}$-submodules and ${\mathfrak A}$-submodules of
$\mathcal A \widehat{\otimes} \mathcal A$ and $\mathcal A$,
respectively, and the quotients $\mathcal A
\widehat{\otimes}_{\mathfrak A} \mathcal A$ and $\mathcal A/\mathcal J$ are
${\mathcal A}$-modules and ${\mathfrak A}$-modules. Also,
$\mathcal A/\mathcal J$ is a Banach $\mathcal A$-${\mathfrak A}$-module
 when ${\mathcal A}$ acts on ${\mathcal
A}/\mathcal J$ canonically. Also, let $\omega_{\mathcal{A}}:
\mathcal{A}\widehat{\otimes}\mathcal{A}\longrightarrow \mathcal{A}$ be the product map, i.e., $\omega_{\mathcal{A}}(a\otimes b)=ab$, and let
$\widetilde{\omega}_{\mathcal{A}} : \mathcal{A}\widehat{\otimes}_{\mathfrak{A}}\mathcal{A} =
{(\mathcal{A}\widehat{\otimes}\mathcal{A}})/{{\mathcal I}}\longrightarrow
{\mathcal{A}}/{{\mathcal J}}$ be its induced product map, i.e.,
$\widetilde{\omega}_{\mathcal{A}}(a\otimes
b+{\mathcal I})=ab+{\mathcal J}$.

Recall that a Banach algebra $\mathcal A$ is said to be {\it module approximately amenable} ({\it contractible}) if for any commutative Banach $\A$-$\mathfrak A$-module $X$, every module derivation $D:\A\longrightarrow X^*$ ($D:\A\longrightarrow X$) is approximately inner.

\begin{defi}\label{def2}  A Banach algebra $\mathcal A$ is said to be {\it module pseudo-amenable} if there is a net $\{\widetilde{u}_j \}$ in ${\mathcal A}\widehat
\otimes _{\mathfrak A} {\mathcal A}$ which is called a {\it module
approximate diagonal} for $\mathcal A$, so that $\widetilde{\omega}_{\mathcal A}(
\widetilde{u}_j) $ is an approximate identity of $
\mathcal A/\mathcal J$ and
 $\widetilde{u}_j \cdot a- a\cdot \widetilde{u}_j \rightarrow 0$ for all $a \in \mathcal A$
\end{defi}

\begin{defi}\label{def3}  A Banach algebra $\mathcal A$ is called {\it module pseudo-contractible} if there is a net $\{\widetilde{u}_j \}$ in ${\mathcal A}\widehat
\otimes _{\mathfrak A} {\mathcal A}$, say {\it module central
approximate diagonal} for $\mathcal A$, such that
$ \widetilde{u}_j \cdot a= a\cdot \widetilde{u}_j$ for all $a \in \mathcal A$
and $\widetilde{u}_j$.
\end{defi}


\section{Main results}

Let $\mathfrak A$ be a non-unital Banach algebra. Then $\mathfrak A^\sharp=\mathfrak A\oplus \mathbb{C}$, the unitization of $\mathfrak A$, is a unital
Banach algebra which contains $\mathfrak A$ as a closed ideal. Let $\mathcal A$ be a Banach algebra and a Banach
$\mathfrak A$-bimodule with compatible actions. Then $\mathcal A$ is a Banach algebra and a Banach $\mathfrak A^\sharp$-bimodule
with compatible actions in the obvious way, i.e., the action of $\mathfrak A^\sharp$ on $\A$ is as follows:
$$(\alpha, \lambda)\cdot a =\alpha\cdot a + \lambda a,~ a\cdot (\alpha, \lambda) = a\cdot\alpha + \lambda a \hspace{1cm}(\lambda\in\mathbb{C}, \alpha\in\mathfrak A, a\in\A).$$

Let $\A$ be a Banach algebra and a Banach $\mathfrak A$-bimodule with compatible actions and let $\mathcal B =(\mathcal A \oplus\mathfrak A^\sharp,\bullet)$, where the multiplication $``\bullet"$ is defined through
$$(a, u) \bullet (b, v) = (ab + av + ub, uv)\hspace{1cm} (a, b\in\mathcal A, u, v\in\mathfrak A^\sharp).$$

Then with the following actions
$$u \cdot(a, v) = (u \cdot a, uv), ~(a, v) \cdot u = (a \cdot u, vu) \hspace{1cm}(a\in\mathcal A, u, v\in\mathfrak A^\sharp),$$
 $\mathcal B$ is a unital Banach algebra and a Banach $\mathfrak A^\sharp$-bimodule with compatible actions. The proof of the following lemma is similar to Lemma 2.2 in \cite{ga1}, so we omit it.

\begin{lemm}\label{l1}
Let $\mathcal A$  be a commutative Banach ${\mathfrak A}$-module. If $\mathcal A$ is module approximately amenable, then it has left and right approximate identity.
\end{lemm}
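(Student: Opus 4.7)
The plan is to adapt the argument of Lemma~2.2 in \cite{ga1} to the module setting. The strategy is to construct a commutative Banach $\mathcal{A}$-$\mathfrak{A}$-module $X$ for which the identity map is a module derivation, and then to exploit module approximate amenability applied to the dual module $X^{**}$ to manufacture a right approximate identity in $\mathcal{A}$.

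The first step is to take $X := \mathcal{A}$ as a Banach space with the \emph{right-trivial} $\mathcal{A}$-bimodule structure $a\cdot x := ax$, $x\cdot a := 0$, and the $\mathfrak{A}$-bimodule structure inherited from $\mathcal{A}$. Using the commutativity hypothesis $\alpha\cdot a = a\cdot \alpha$ together with the standing compatibility axioms, one checks routinely that $X$ is a commutative Banach $\mathcal{A}$-$\mathfrak{A}$-module and that $D_0 := \mathrm{id}_{\mathcal{A}}\colon\mathcal{A}\to X$ is a module derivation: the Leibniz rule degenerates to $ab = 0 + ab$, and the $\mathfrak{A}$-equivariance is immediate. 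Since the dual of a commutative Banach $\mathcal{A}$-$\mathfrak{A}$-module is again commutative, the composition $D := \kappa \circ D_0 \colon \mathcal{A} \to X^{**}$ with the canonical isometric embedding $\kappa\colon X \hookrightarrow X^{**}$ is a module derivation into the dual module $(X^{*})^{*}$. Invoking module approximate amenability produces a net $(\xi_i) \subset X^{**}$ with $\kappa(a) = \lim_i(a\cdot\xi_i - \xi_i\cdot a)$ in norm for every $a\in\mathcal{A}$. A direct dualization of the trivial right action shows $a\cdot f = 0$ on $X^{*}$ and therefore $\xi\cdot a = 0$ on $X^{**}$, collapsing the relation to $\kappa(a) = \lim_i a\cdot\xi_i$ in norm.

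The hard part will be transferring this convergence from $X^{**}$ back to $\mathcal{A}$ itself. For this step the plan is to invoke Goldstine's theorem, which supplies, for each $i$, a bounded net $(y_{i,\alpha})_{\alpha} \subset \mathcal{A}$ with $\kappa(y_{i,\alpha}) \to \xi_i$ weak-$*$; weak-$*$ continuity of the left $\mathcal{A}$-action on $X^{**}$ then gives $\kappa(a y_{i,\alpha}) = a\cdot\kappa(y_{i,\alpha}) \to a\cdot\xi_i$ weak-$*$, and since the weak-$*$ topology of $X^{**}$ restricted to $\kappa(\mathcal{A})$ is the weak topology of $\mathcal{A}$, a standard iterated-limit/diagonal extraction produces a single net $(z_\beta) \subset \mathcal{A}$ with $a z_\beta \to a$ weakly in $\mathcal{A}$ for every $a$. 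A convexification via Mazur's theorem, applied in the finite Banach-space powers $\mathcal{A}^{F}$ for finite $F\subset \mathcal{A}$, upgrades this weak right approximate identity to a norm right approximate identity. The existence of a left approximate identity is obtained by running the same argument with the \emph{left-trivial} structure $a\cdot x := 0$, $x\cdot a := xa$ on $X=\mathcal{A}$.
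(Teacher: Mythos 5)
Your argument is correct and is exactly the route the paper intends: it omits the proof precisely because it is the adaptation of Lemma~2.2 of \cite{ga1}, namely taking $X=\mathcal A$ with the right-trivial (resp.\ left-trivial) action, noting the identity map is a module derivation into $X^{**}=(X^*)^*$ with $X^*$ commutative, and descending from $X^{**}$ to $\mathcal A$ by Goldstine's theorem, an iterated-limit argument and Mazur's theorem (the same machinery the paper itself uses later in Theorem~\ref{t4} and Theorem~\ref{thmt}). Your use of the commutativity hypothesis to verify the compatibility condition $a\cdot(\alpha\cdot x)=(a\cdot\alpha)\cdot x$ is the one genuinely module-theoretic point, and you handle it correctly.
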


\begin{theo}\label{t1}
Let $\mathcal A$  be a Banach ${\mathfrak A}$-module with bounded approximate identity. Then $\mathcal A$ is module pseudo-amenable if and only if $\mathcal A$ is module approximately  amenable.
\end{theo}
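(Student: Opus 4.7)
The plan is to adapt the Ghahramani--Zhang equivalence \cite{gz} between pseudo-amenability and approximate amenability (under a BAI) to the module setting. Both implications need work, but the harder one is pseudo $\Rightarrow$ approximate, which is where the BAI is used.

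(Implication $\Rightarrow$.) Let $X$ be a commutative Banach $\mathcal{A}$-$\mathfrak{A}$-module and $D \colon \mathcal{A} \to X^{*}$ a module derivation. I would define
\[
\Phi_D \colon \mathcal{A} \widehat{\otimes}_{\mathfrak{A}} \mathcal{A} \to X^{*}, \qquad \Phi_D\!\Bigl(\sum_i a_i \otimes b_i + \mathcal{I}\Bigr) = \sum_i a_i \cdot D(b_i),
\]
and check well-definedness on the quotient via the module-derivation identity $D(\alpha \cdot b) = \alpha \cdot D(b)$ together with the compatibility $(a \cdot \alpha) \cdot D(b) = a \cdot (\alpha \cdot D(b))$. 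A direct expansion using the Leibniz rule produces the Johnson-type identity
\[
a \cdot \Phi_D(\widetilde{u}) - \Phi_D(\widetilde{u}) \cdot a = \Phi_D(a \cdot \widetilde{u} - \widetilde{u} \cdot a) + \widetilde{\omega}_{\mathcal{A}}(\widetilde{u}) \cdot D(a).
\]
Setting $\xi_j = \Phi_D(\widetilde{u}_j)$ for a module approximate diagonal supplied by Definition~\ref{def2}, norm-continuity of $\Phi_D$ together with $a \cdot \widetilde{u}_j - \widetilde{u}_j \cdot a \to 0$ forces the first right-hand term to zero, yielding $a \cdot \xi_j - \xi_j \cdot a = \widetilde{\omega}_{\mathcal{A}}(\widetilde{u}_j) \cdot D(a) + o(1)$.

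The remaining step --- and the main obstacle --- is to show $\widetilde{\omega}_{\mathcal{A}}(\widetilde{u}_j) \cdot D(a) \to D(a)$. The net $\widetilde{\omega}_{\mathcal{A}}(\widetilde{u}_j)$ is only a (possibly unbounded) approximate identity of $\mathcal{A}/\mathcal{J}$, and such a net need not act as the identity on an arbitrary dual module. This is exactly where the BAI hypothesis intervenes: the BAI $(e_\alpha) \subset \mathcal{A}$ projects to a \emph{bounded} approximate identity of $\mathcal{A}/\mathcal{J}$, and the derivation identities $D(a e_\alpha) = D(a) \cdot e_\alpha + a \cdot D(e_\alpha)$ and $D(e_\alpha a) = D(e_\alpha) \cdot a + e_\alpha \cdot D(a)$ confine $D(a)$ to the essential part of $X^{*}$ on which the $\mathcal{A}/\mathcal{J}$-action is strongly continuous. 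I would then refine $(\widetilde{u}_j)$ by combining it with the BAI (for instance, passing to a net of the form $(e_\alpha \cdot \widetilde{u}_j \cdot e_\beta)$ over a product directed set) so that $\widetilde{\omega}_{\mathcal{A}}$ of the refined net gains the boundedness needed to force convergence to the identity on $D(a)$.

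(Implication $\Leftarrow$.) For the reverse direction, with BAI $(e_\alpha)$ the candidate $u_\alpha = e_\alpha \otimes e_\alpha + \mathcal{I}$ already satisfies $\widetilde{\omega}_{\mathcal{A}}(u_\alpha) = e_\alpha^{2} + \mathcal{J}$, which is an approximate identity of $\mathcal{A}/\mathcal{J}$. The commutation defect $a \cdot u_\alpha - u_\alpha \cdot a$ is nonzero in general, but the assignment $a \mapsto a \cdot u_\alpha - u_\alpha \cdot a$ defines a module derivation whose image lies (asymptotically) in $\ker \widetilde{\omega}_{\mathcal{A}}$; module approximate amenability --- after embedding the target into a bidual to put it in the required dual-module form --- supplies an approximately inner correction that converts $(u_\alpha)$ into a genuine module approximate diagonal in the sense of Definition~\ref{def2}.
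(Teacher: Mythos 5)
Your forward direction starts on the same track as the paper (define a map on $\mathcal A\widehat\otimes_{\mathfrak A}\mathcal A$ from $D$, use the Johnson-type identity, set $\xi_j$), but the step you yourself flag as the main obstacle is not resolved by your proposed fix, and that is a genuine gap. Replacing $\widetilde u_j$ by $e_\alpha\cdot\widetilde u_j\cdot e_\beta$ does not make anything bounded: the norms $\|e_\alpha\cdot\widetilde u_j\cdot e_\beta\|\le\|e_\alpha\|\,\|\widetilde u_j\|\,\|e_\beta\|$ still blow up with $j$, and boundedness of $\widetilde\omega_{\mathcal A}(\widetilde u_j)$ is exactly what a module approximate diagonal does not give you. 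The mechanism that actually works, and the one the paper uses, is to exploit the BAI to reduce to the case where $X$ is $\mathcal A$-pseudo-unital (the paper cites \cite[Lemma 2.1]{am1} for this reduction); for a pseudo-unital module one gets $D(a)\cdot\widetilde\omega_{\mathcal A}(\widetilde u_j)\to D(a)$ without any boundedness of the diagonal, because elements of $X$ factor through the algebra action. Moreover, even after that reduction the convergence is only in the weak$^*$ topology of $X^*$, whereas module approximate amenability requires norm limits; the paper closes this gap by invoking \cite[Lemma 3.2]{bp} (a Mazur/convexity-type passage from $w^*$-approximate innerness to approximate innerness, cf.\ the argument in Theorem \ref{t4}). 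Your write-up treats the error terms as $o(1)$ in norm and never addresses this topology mismatch.

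Your converse also has concrete holes. The map $a\mapsto a\cdot u_\alpha-u_\alpha\cdot a$ with $u_\alpha=e_\alpha\otimes e_\alpha+\mathcal I$ does \emph{not} take values in $\ker\widetilde\omega_{\mathcal A}$, only ``asymptotically'' so; to get an honest module derivation into a kernel one must first adjoin a unit, which is precisely why the paper works in $\mathcal B=(\mathcal A\oplus\mathfrak A^\sharp,\bullet)$ and with $\mathfrak A^\sharp$-actions. Second, module approximate amenability only applies to duals of \emph{commutative} Banach $\mathcal A$-$\mathfrak A$-modules, and neither $\ker\widetilde\omega_{\mathcal A}$ nor its bidual is commutative as an $\mathfrak A$-module in general (elsewhere in the paper this commutativity is an explicit extra hypothesis, e.g.\ in Theorems \ref{thmt} and \ref{tt}); you cannot simply ``embed the target into a bidual'' and quote the hypothesis. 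Third, even granting corrections in a bidual, you still owe a Goldstine--Mazur descent to produce a net in $\mathcal A\widehat\otimes_{\mathfrak A}\mathcal A$ itself. The paper sidesteps all three issues by citing \cite[Corollary 3.6]{bp} and \cite[Theorem 3.4]{bp} to obtain a module approximate diagonal $m_j=u_j+v_j\otimes\mathbf 1+\mathbf 1\otimes w_j+\mathbf 1\otimes\mathbf 1$ for $\mathcal B$, then replacing the adjoined unit $\mathbf 1$ by the BAI $(e_i)$ and assembling the resulting doubly indexed family by an iterated limit construction. As it stands, your proposal is a plausible outline of the classical Ghahramani--Zhang strategy, but in the module setting both directions are missing the specific devices (pseudo-unital reduction plus $w^*$-to-norm conversion; unitization $\mathcal B$ plus the \cite{bp} machinery) that make the argument go through.
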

\begin{proof}
Since $\mathcal A$ is module pseudo-amenable, there is a  module approximate diagonal $\{\widetilde{u}_j \}$ in ${\mathcal A}\widehat
\otimes _{\mathfrak A} {\mathcal A}$ such that
\begin{equation*}
    \widetilde{u}_j \cdot a- a\cdot \widetilde{u}_j \rightarrow 0,~\emph{\emph{and}}~(a+\J)\widetilde{\omega}_{\mathcal A}(
\widetilde{u}_j)\rightarrow a+\J,
\end{equation*}
for every $a\in\mathcal A$. Let $D:\mathcal A\longrightarrow X^*$ be a bounded module derivation, where $X$ is a $\mathcal A$-pseudo-unital (see \cite[Lemma 2.1]{am1}). Let $\|D\|\leq M$. Define $\Psi:{\mathcal A}\widehat
\otimes _{\mathfrak A} {\mathcal A}\longrightarrow X^*$ by $\Psi(a\otimes b)=D(a)\cdot b$. Thus, $\Psi$ is a bounded $\mathfrak A$-module map and $\|\Psi\|\leq\|D\|$. For every $a\in\mathcal A$ we have
\begin{equation}\label{e1}
    \Psi(\widetilde{u}_j \cdot a- a\cdot \widetilde{u}_j)=\Psi(\widetilde{u}_j)\cdot a-D(a)\cdot \widetilde{\omega}_{\mathcal A}(\widetilde{u}_j)-a\cdot \Psi(\widetilde{u}_j).
\end{equation}
Let $\Psi(\widetilde{u}_j)=-\xi_j$. Then (\ref{e1}) implies that
\begin{equation}\label{e2}
    D(a)\cdot \widetilde{\omega}_{\mathcal A}(\widetilde{u}_j)=a\cdot\xi_j-\xi_j\cdot a-\Psi(\widetilde{u}_j \cdot a- a\cdot \widetilde{u}_j).
\end{equation}
Since $X$ is $\mathcal A$-pseudo-unital, $D(a)\cdot \widetilde{\omega}_{\mathcal A}(\widetilde{u}_j)\stackrel{\emph{\emph{w}}^*}{\longrightarrow}D(a)$. On the other hand,
\begin{equation}\label{e3}
    \|\Psi(\widetilde{u}_j \cdot a- a\cdot \widetilde{u}_j)\|\leq M\|\widetilde{u}_j \cdot a- a\cdot \widetilde{u}_j\|\rightarrow0.
\end{equation}
as $j\rightarrow\infty$. It follows from \cite[Lemma 3.2]{bp}, (\ref{e2}) and (\ref{e3}) that $D(a)=\lim_j(a\cdot\xi_j-\xi_j\cdot a)$, for all $a\in\mathcal A$.

Conversely, assume that $\mathcal A$ is module approximately amenable with bounded approximate identity, say $(e_i)$. Corollary 3.6 of \cite{bp} shows that $\mathcal A$ is module approximately contractible as an $\mathfrak A^\sharp$-module. Applying Theorem 3.4 of \cite{bp}, $\mathcal B$ has a module approximate diagonal $\{m_j\}$ as an $\mathfrak A^\sharp$-module. We put $m_j$ as follows:
\begin{equation*}
    m_j=u_j+v_j\otimes _{\mathfrak A^\sharp} \textbf{1}+\textbf{1}\otimes _{\mathfrak A^\sharp} w_j+\textbf{1}\otimes _{\mathfrak A^\sharp}\textbf{1},
\end{equation*}
in which $u_j\in\mathcal A\otimes _{\mathfrak A^\sharp}\mathcal A$, $v_j,w_j\in\mathcal A$ and $\textbf{1}$ is unit of $\mathfrak A^\sharp$ (we can also construct these nets by $(e_i)$). Now consider the following net
\begin{equation*}
    n_{i,j}=u_j+v_j\otimes _{\mathfrak A^\sharp} e_i+e_i\otimes _{\mathfrak A^\sharp} w_j+e_i\otimes _{\mathfrak A^\sharp}e_i.
\end{equation*}
Let $I$ be the index set for the net $(e_i)$ and $J$ be the index set for the nets $(u_j)$, $(v_j)$, and $(w_j)$. We make the required net
$(n_k)$ by using an iterated limit construction (see \cite[page 69]{ke}). Our
indexing directed set is defined to be $K=I\times\Pi_{i\in
I}J$, equipped with the product ordering, and for each
$k=(i,f)\in K$, we define $n_k=m_{i,f(i)}$. Now, let
\begin{equation*}
    n_k=u_k+v_k\otimes _{\mathfrak A} e_k+e_k\otimes _{\mathfrak A} w_k+e_k\otimes _{\mathfrak A}e_k.
\end{equation*}
It is easy to check that $\{n_k\}$ is module diagonal for $\mathcal A$. This finishes the  proof.
\end{proof}

The corresponding result
characterizing pseudo-contractibility of a Banach algebra which is obtained in \cite[Theorem 2.4]{gz} is as follows.
\begin{theo}
Let $\mathcal A$  be a Banach ${\mathfrak A}$-module. Then the following statements are equivalent:
\begin{itemize}
  \item[(i)] $\mathcal A^\sharp$ is $\mathfrak{A}^\sharp$-module pseudo-contractible;
  \item[(ii)]$\mathcal A$ is $\mathfrak{A}^\sharp$-module pseudo-contractible and has identity;
  \item[(iii)] $\mathcal A$ is $\mathfrak{A}^\sharp$-module contractible.
\end{itemize}

In addition, if $\mathcal A$ is a left or right essential $\mathfrak{A}$-module, the above statements are equivalent with following statement:
\begin{itemize}
  \item[(iv)] $\mathcal A$ is $\mathfrak{A}$-module contractible.
\end{itemize}
\end{theo}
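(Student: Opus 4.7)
The plan is to establish the equivalence of (i), (ii), (iii) by running the cyclic chain (iii) $\Rightarrow$ (i) $\Rightarrow$ (ii) $\Rightarrow$ (iii), which parallels the scheme used in \cite[Theorem 2.4]{gz} but adapted to the module setting via the constructions of $\mathcal B$ and $\mathfrak A^\sharp$ already introduced. Only afterward will I handle the essential-module case (iii) $\Leftrightarrow$ (iv), where the passage from $\mathfrak A^\sharp$-actions to $\mathfrak A$-actions is reversible because the unit of $\mathfrak A^\sharp$ acts by the scalar identity.

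For (iii) $\Rightarrow$ (i), I start with a module diagonal $u \in \mathcal A\widehat\otimes_{\mathfrak A^\sharp}\mathcal A$ satisfying $a\cdot u = u\cdot a$ and $\widetilde\omega_{\mathcal A}(u) + \mathcal J$ acting as a unit for $\mathcal A/\mathcal J$. As is standard, module contractibility forces $\mathcal A$ to be unital (with identity $e$ obtained from $\widetilde\omega_{\mathcal A}(u)$), and then the element
\begin{equation*}
\widetilde v = u - e\otimes_{\mathfrak A^\sharp} e + \mathbf 1\otimes_{\mathfrak A^\sharp}\mathbf 1 \in \mathcal A^\sharp\widehat\otimes_{\mathfrak A^\sharp}\mathcal A^\sharp
\end{equation*}
is checked directly to be a module central approximate diagonal (in fact an exact one) for $\mathcal A^\sharp$. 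For (i) $\Rightarrow$ (ii), I decompose any central approximate diagonal $\{\widetilde u_j\}$ of $\mathcal A^\sharp$ along $\mathcal A^\sharp = \mathcal A \oplus \mathbb C$ as
\begin{equation*}
\widetilde u_j = w_j + a_j\otimes_{\mathfrak A^\sharp}\mathbf 1 + \mathbf 1\otimes_{\mathfrak A^\sharp}b_j + \lambda_j\,\mathbf 1\otimes_{\mathfrak A^\sharp}\mathbf 1,
\end{equation*}
with $w_j\in\mathcal A\widehat\otimes_{\mathfrak A^\sharp}\mathcal A$ and $a_j,b_j\in\mathcal A$. Applying $\widetilde\omega_{\mathcal A^\sharp}$ and reading off the scalar component yields $\lambda_j\to 1$, so $a_j+b_j$ converges to an identity of $\mathcal A/\mathcal J$ and, after a standard Cohen-type factorization argument via Lemma~\ref{l1}, to an identity of $\mathcal A$ itself; the $\mathcal A$-components then give a central approximate diagonal for $\mathcal A$.

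The main obstacle is (ii) $\Rightarrow$ (iii), since pseudo-contractibility yields a \emph{net} of exactly central elements whose product is only an approximate identity, whereas contractibility demands a \emph{single} element with an exact identity image. Letting $e$ denote the identity of $\mathcal A$ and $\{\widetilde u_j\}$ a central approximate diagonal, the plan is first to adjust $\widetilde u_j$ to $\widetilde u_j' := \widetilde u_j + e\otimes_{\mathfrak A^\sharp}(e - \widetilde\omega_{\mathcal A}(\widetilde u_j))$ so that $\widetilde\omega_{\mathcal A}(\widetilde u_j') = e$ holds exactly modulo $\mathcal J$, while the extra summand tends to zero in the centrality relation. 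I then pass to a weak$^\ast$ accumulation point $u^{**}$ of $\{\widetilde u_j'\}$ in $(\mathcal A\widehat\otimes_{\mathfrak A^\sharp}\mathcal A)^{**}$, obtaining a virtual diagonal-type element that is fixed under the $\mathcal A$-bimodule action. Finally I use that $\mathcal A$ has an identity together with Goldstine's theorem (along the lines of the analogous step in \cite[Theorem 2.4]{gz}) to pull $u^{**}$ back to an honest module diagonal in $\mathcal A\widehat\otimes_{\mathfrak A^\sharp}\mathcal A$, proving (iii).

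For the addendum (iii) $\Leftrightarrow$ (iv) under the essentiality hypothesis, I observe that the module tensor product $\mathcal A\widehat\otimes_{\mathfrak A}\mathcal A$ and the ideal $\mathcal J_{\mathcal A}$ are unchanged when the action of $\mathfrak A$ is extended to $\mathfrak A^\sharp$ by letting the adjoined unit act as the identity, because essentiality implies $\mathbf 1\cdot a = a = a\cdot\mathbf 1$ on a dense subset of $\mathcal A$. Consequently any module diagonal for one action is automatically a module diagonal for the other, giving the equivalence of (iii) and (iv) without further work.
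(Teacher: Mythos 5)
The decisive gap is your step (ii)$\Rightarrow$(iii), which is exactly the point the paper (following \cite[Theorem~2.4]{gz}) handles differently. Your correction $\widetilde u_j' := \widetilde u_j + e\otimes_{\mathfrak A^\sharp}(e-\widetilde\omega_{\mathcal A}(\widetilde u_j))$ trades exact centrality for an exact product image, so the $\widetilde u_j'$ are only \emph{approximately} central; a weak$^*$ cluster point $u^{**}$ in $(\mathcal A\widehat\otimes_{\mathfrak A^\sharp}\mathcal A)^{**}$ is then indeed exactly central, but Goldstine/Mazur can only return elements of $\mathcal A\widehat\otimes_{\mathfrak A^\sharp}\mathcal A$ that are again approximately central: exactness is lost in the pull-back, and you end with nothing more than a module approximate diagonal. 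That suffices for derivations into dual modules, but (iii) is module \emph{contractibility}, i.e.\ innerness of derivations into arbitrary commutative modules, and the inability to pull a ``virtual'' diagonal in the bidual back to an honest diagonal is precisely the amenable-versus-contractible divide. The argument that actually works (and that the paper invokes) keeps the exactly central net $\widetilde u_j$, applies $\widetilde\omega_{\mathcal A}$ to see that $\widetilde\omega_{\mathcal A}(\widetilde u_j)$ is central in $\mathcal A/\mathcal J$ and converges to the identity, hence is eventually invertible, and multiplies $\widetilde u_j$ by the inverse to obtain an exact diagonal --- no biduals at all. Two smaller slips in the same spirit: in (iii)$\Rightarrow$(i) your element $u-e\otimes_{\mathfrak A^\sharp}e+\mathbf 1\otimes_{\mathfrak A^\sharp}\mathbf 1$ is not central, since $a\cdot\widetilde v-\widetilde v\cdot a=a\otimes(\mathbf 1-e)-(\mathbf 1-e)\otimes a$; the standard choice is $u+(\mathbf 1-e)\otimes(\mathbf 1-e)$. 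In (i)$\Rightarrow$(ii) the $\mathcal A$-component $\widetilde u_j$ alone is \emph{not} central: one needs the componentwise relations (\ref{theo1}), $aa_j=-\lambda_ja$ and $b_ja=-\lambda_ja$, both to manufacture the identity $e$ (via the one-sided approximate identities of Lemma~\ref{l1}, which then converge in norm --- no Cohen factorization is involved) and to form the corrected net $\widetilde u_j-\lambda_je\otimes_{\mathfrak A^\sharp}e$, which is what is actually central.

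Your treatment of the addendum (iii)$\Leftrightarrow$(iv) also misses the real content. The observation that the adjoined unit acts as the identity on $\mathcal A$ is true by the very definition of the $\mathfrak A^\sharp$-action, so $\widetilde{\mathcal I}_{\mathcal A}=\mathcal I_{\mathcal A}$ and $\widetilde{\mathcal J}_{\mathcal A}=\mathcal J_{\mathcal A}$ hold with \emph{no} essentiality hypothesis; an argument in which the hypothesis plays no role cannot be the proof of an implication that needs it. Moreover (iii) and (iv) are statements about module derivations, not about diagonals, and a module derivation is only additive and $\mathfrak A$-compatible, not a priori a bounded linear derivation compatible with the extended action of $\mathfrak A^\sharp$ on $X$. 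Essentiality of $\mathcal A$ as an $\mathfrak A$-module is used precisely here (as in the paper, via \cite{bp}): it guarantees that an $\mathfrak A$-module derivation $D:\mathcal A\to X$ into a commutative $\mathcal A$-$\mathfrak A$-module is a bounded derivation satisfying $D((\alpha,\lambda)\cdot a)=(\alpha,\lambda)\cdot D(a)$ and $D(a\cdot(\alpha,\lambda))=D(a)\cdot(\alpha,\lambda)$, hence an $\mathfrak A^\sharp$-module derivation to which (iii) applies; the reverse implication (iv)$\Rightarrow$(iii) is the easy one, since every $\mathfrak A^\sharp$-module derivation is in particular an $\mathfrak A$-module derivation. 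Your diagonal-level argument additionally presupposes a diagonal characterization of $\mathfrak A$-module contractibility that the theorem does not supply.
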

\begin{proof}
(i)$\Rightarrow$(ii)  Let $(\widetilde{U}_j)\subseteq\mathcal A^\sharp\widehat{\otimes}_{\mathfrak{A}^\sharp}\mathcal A^\sharp$ be a central approximate diagonal for $\mathcal A^\sharp$ such that $a\cdot \widetilde{U}_j=\widetilde{U}_j\cdot a$ and $\widetilde{\omega}_{\mathcal A^\sharp}(\widetilde{U}_j)\rightarrow1_\mathcal A$ for every $a\in \mathcal A$. We have
\begin{equation*}
    \mathcal A^\sharp\widehat{\otimes}_{\mathfrak{A}^\sharp}\mathcal A^\sharp=(\mathcal A\widehat{\otimes}_{\mathfrak{A}^\sharp}\mathcal A)\oplus
    (\mathcal A\widehat{\otimes}_{\mathfrak{A}^\sharp}\mathfrak{A}^\sharp)\oplus(\mathfrak{A}^\sharp\widehat{\otimes}_{\mathfrak{A}^\sharp}\mathcal A)
    \oplus(\mathfrak{A}^\sharp\widehat{\otimes}_{\mathfrak{A}^\sharp}\mathfrak{A}^\sharp),
\end{equation*}
and thus we can write
\begin{equation*}
    \widetilde{U}_j=\widetilde{u}_j+F_j{\otimes}_{\mathfrak{A}^\sharp}1_\mathcal A+1_\A{\otimes}_{\mathfrak{A}^\sharp}G_j
    +c_j1_\A{\otimes}_{\mathfrak{A}^\sharp}1_\mathcal A,
\end{equation*}
where $\widetilde{u}_j\in\mathcal A{\otimes}_{\mathfrak{A}^\sharp}\mathcal A$, $a_j, b_j\in\mathcal A$, and $\lambda_j\in\mathbb{C}$. Then for every $a\in\mathcal A$ we have
\begin{equation}\label{theo0}
    a\cdot \widetilde{u}_j+a{\otimes}_{\mathfrak{A}^\sharp}b_j= \widetilde{u}_j\cdot a+a_j{\otimes}_{\mathfrak{A}^\sharp}a,\hspace{0.5cm}\widetilde{\omega}_\mathcal A(\widetilde{u}_j)+a_j+b_j\rightarrow0,
\end{equation}
and
\begin{equation}\label{theo1}
    aa_j=-\lambda_ja,\hspace{0.5cm}b_ja=-\lambda_ja, \hspace{0.5cm}\lambda_j\rightarrow1.
\end{equation}
Since $\mathcal A^\sharp$ is $\mathfrak{A}^\sharp$-module pseudo-contractible, $\mathcal A$ is $\mathfrak{A}^\sharp$-module approximately contractible (\cite[Theorem 3.4]{bp}) and this implies that $\mathcal A$ is $\mathfrak{A}^\sharp$-module approximately amenable (\cite[Corollary 3.6]{bp}). It follows from Lemma \ref{l1} that $\mathcal A$ has left and right approximate identity, say $(e_i)$ and $(f_i)$, respectively. By (\ref{theo1}), we have $e_ia_j=-\lambda_je_i$ and $b_jf_i=-\lambda_jf_i$. Set $e_l=\lim_i e_i$ and $e_r=\lim_j f_i$. Then $e_l=e_r=e$ is an identity for $\mathcal A$. Therefore $a_j=-\lambda_je$ and $b_j=-\lambda_j e$. Using (\ref{theo0}), we get
\begin{equation*}
    a\cdot (\widetilde{u}_j-\lambda_je{\otimes}_{\mathfrak{A}^\sharp}e)= (\widetilde{u}_j -\lambda_je{\otimes}_{\mathfrak{A}^\sharp}e)\cdot a,\hspace{0.5cm}\widetilde{\omega}_\mathcal A(\widetilde{u}_j)-2\lambda_je\rightarrow 0.
\end{equation*}
This means that the net $(\widetilde{U}_j)_j$ defined as above is the desired net.

(ii)$\Rightarrow$(iii) Let $(\widetilde{u}_j)$ be module central
approximate diagonal for $\mathcal A$ and $e$ be its unit such that $\widetilde{\omega}_j(\widetilde{u}_j)\rightarrow e$. Similar to the proof of Theorem 2.4 ((ii)$\Rightarrow$(iii)) in \cite{gz}, we conclude that (iii) holds.

(iii)$\Rightarrow$(i) In light of \cite[Theorem 2.3.11]{bod}, $\mathcal A^\sharp$ is $\mathfrak{A}^\sharp$-module pseudo-contractible.\\
Now, let $\mathcal A$ be left or right essential $\mathfrak{A}$-module and (iv) hold. Since every $\mathfrak{A}^\sharp$-module homomorphism is also an $\mathfrak{A}$-module homomorphism, (iv) implies (iii).

Conversely, let (iii) hold, and let $X$ be a commutative Banach $\mathcal A$-$\mathfrak{A}$-module and $D:\mathcal A\longrightarrow X$ be a $\mathfrak{A}^\sharp$-module derivation. Since $X$ become a Banach $\mathcal A$-$\mathfrak{A}^\sharp$-module and $\mathcal A$ is left (right) essential $\mathfrak{A}$-module, $D$ is a bounded derivation (see \cite{bp}) and we have
 $$D((\alpha,\lambda)\cdot a)=(\alpha,\lambda)\cdot D(a),~\emph{\emph{and}}~~\ D(a\cdot (\alpha,\lambda))=D(a)\cdot(\alpha,\lambda),$$
 for every $a\in\mathcal A$, $\alpha\in\mathfrak{A}$ and $\lambda\in\mathbb{C}$. Hence $D$ is an $\mathfrak{A}^\sharp$-module derivation which is inner. Therefore $\mathcal A$ is $\mathfrak{A}$-module contractible.
\end{proof}

Consider the module projective tensor product ${\mathcal
A}\hat{\otimes} _{\mathfrak A^\sharp} {\mathcal A}$ which is isomorphic
to the quotient space $(\mathcal A \hat{\otimes} \mathcal
A)/{\widetilde{\mathcal I}_{\mathcal A}}$, where $\widetilde{\mathcal I}_{\mathcal A}$ is the closed linear span of $\{ a\cdot\alpha \otimes b-a
\otimes\alpha \cdot b : \alpha\in {\mathfrak A^\sharp},a,b\in{\mathcal A}\}$, and the closed ideal
$\widetilde{\mathcal J}_{\mathcal A}$ of ${\mathcal A}$ generated by elements of the form $
(a\cdot\alpha) b-a(\alpha \cdot b)$ for $ \alpha\in {\mathfrak
A^\sharp},a,b\in{\mathcal A}$.  Also,  let
$\overline{\omega}_{\mathcal{A}} : \mathcal{A}\hat{\otimes}_{\mathfrak{A}^\sharp}\mathcal{A} =
{(\mathcal{A}\hat{\otimes}\mathcal{A}})/{\widetilde{{\mathcal I}}_\mathcal A}\longrightarrow
{\mathcal{A}}/{\widetilde{{\mathcal J}}_\mathcal A}$ defined by
$\overline{\omega}_{\mathcal{A}}(a\otimes
b+\widetilde{{\mathcal I}}_\mathcal A)=ab+\widetilde{{\mathcal J}}_\mathcal A$.
\begin{prop}
Let $\mathcal A$ be a commutative Banach $\mathfrak{A}$-module with central approximate identity. If $\mathcal A$ is $\mathfrak{A}^\sharp$-module approximately amenable, then it is $\mathfrak{A}^\sharp$-module pseudo-amenable. Furthermore, if $\A$ is left or right essential $\mathfrak{A}$-module, then $\mathfrak{A}$-module approximate amenability of $\mathcal A$ implies its $\mathfrak{A}$-module pseudo-amenability.
\end{prop}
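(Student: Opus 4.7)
The plan is to mimic the structure of the proof of Theorem~\ref{t1}, but with the central approximate identity $(e_i)$ playing the role of the bounded approximate identity. First, $\mathfrak A^\sharp$-module approximate amenability of $\mathcal A$ yields, via Corollary~3.6 of \cite{bp}, $\mathfrak A^\sharp$-module approximate contractibility. By Theorem~3.4 of \cite{bp}, the Banach algebra $\mathcal B = \mathcal A \oplus \mathfrak A^\sharp$ (equipped with the product and module structure introduced at the start of Section~3) possesses a module approximate diagonal $\{m_j\} \subset \mathcal B \widehat\otimes_{\mathfrak A^\sharp} \mathcal B$. Decomposing $\mathcal B \widehat\otimes_{\mathfrak A^\sharp} \mathcal B$ into four summands as in the proof of Theorem~\ref{t1}, I would write
$$m_j = u_j + v_j \otimes_{\mathfrak A^\sharp} \mathbf{1} + \mathbf{1} \otimes_{\mathfrak A^\sharp} w_j + \mathbf{1} \otimes_{\mathfrak A^\sharp} \mathbf{1},$$
with $u_j \in \mathcal A \widehat\otimes_{\mathfrak A^\sharp} \mathcal A$ and $v_j,w_j \in \mathcal A$.

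Next, I would use the centrality of $(e_i)$ to substitute $e_i$ for the adjoined unit $\mathbf{1}$. For each pair $(i,j)$ set
$$n_{i,j} = u_j + v_j \otimes_{\mathfrak A^\sharp} e_i + e_i \otimes_{\mathfrak A^\sharp} w_j + e_i \otimes_{\mathfrak A^\sharp} e_i,$$
and then pass to a single net $(n_k)_{k\in K}$ via the iterated-limit construction of Kelley on the directed set $K = I \times \prod_{i\in I} J$, exactly as in Theorem~\ref{t1}. Two things would have to be verified: that $\overline{\omega}_{\mathcal A}(n_k)$ is an approximate identity of $\mathcal A/\widetilde{\mathcal J}_\mathcal A$, and that $n_k \cdot a - a \cdot n_k \to 0$ for every $a \in \mathcal A$. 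The first follows by combining the approximate-identity condition on $\widetilde{\omega}_{\mathcal B}(m_j)$ in $\mathcal B/\widetilde{\mathcal J}_\mathcal B$ with $e_i a - a \to 0$ and $a e_i - a \to 0$ modulo $\widetilde{\mathcal J}_\mathcal A$. The second relies on centrality of $(e_i)$ with respect to $\mathfrak A$ (and hence $\mathfrak A^\sharp$) to commute $e_i$ past the balanced tensor product and reduce the four cross terms to expressions controlled by the diagonal property of $m_j$.

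For the second assertion, the essentiality of $\mathcal A$ as an $\mathfrak A$-module forces the action of the adjoined unit $\mathbf{1}\in\mathfrak A^\sharp$ to be determined by the factorization $\mathcal A = \mathfrak A \cdot \mathcal A$ (or $\mathcal A \cdot \mathfrak A$); hence every $\mathfrak A$-module derivation is automatically an $\mathfrak A^\sharp$-module derivation, and vice versa, as in the implication (iii)$\Rightarrow$(iv) of the preceding theorem. The same observation identifies $\mathfrak A$-module and $\mathfrak A^\sharp$-module pseudo-amenability. The statement therefore reduces to the first part.

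The main obstacle I anticipate is the verification of the commutator condition $n_k \cdot a - a \cdot n_k \to 0$: because the central approximate identity is \emph{not} assumed bounded, naive norm estimates on the mixed terms $v_j \otimes_{\mathfrak A^\sharp} e_i$ and $e_i \otimes_{\mathfrak A^\sharp} w_j$ do not decay on their own. It is precisely the $\mathfrak A$-centrality of $(e_i)$ that lets one pull $e_i$ across the balanced tensor, rewrite these cross terms as combinations of $e_i a - a e_i$ with the commutator $u_j \cdot a - a \cdot u_j$ (which already vanishes in the limit), and then close the argument by the iterated-limit construction.
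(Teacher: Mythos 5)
Your reduction to the machinery of Theorem~\ref{t1} breaks down exactly at the point you flag, and the fix you propose does not exist. After replacing the adjoined unit $\mathbf 1$ by $e_i$, the commutator $a\cdot n_{i,j}-n_{i,j}\cdot a$ (and likewise the approximate-identity condition for $\overline{\omega}_{\mathcal A}(n_{i,j})$) contains terms of the form $v_j\otimes_{\mathfrak A^\sharp}(a-e_ia)$, $(ae_i-a)\otimes_{\mathfrak A^\sharp}w_j$ and $(ae_i-a)\otimes_{\mathfrak A^\sharp}e_i$. Whichever way you order the iterated limit over $K=I\times\prod_{i\in I}J$, at least one family survives: with the inner limit in $j$ the first two are only bounded by $\|v_j\|\,\|a-e_ia\|$ and $\|ae_i-a\|\,\|w_j\|$, and nothing in the hypotheses bounds $\|v_j\|,\|w_j\|$; with the inner limit in $i$ you need $\|ae_i-a\|\,\|e_i\|\to0$, which fails for an unbounded approximate identity. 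In Theorem~\ref{t1} these estimates are rescued precisely by boundedness; the present proposition deliberately drops boundedness and trades it for centrality, and centrality does not repair the estimates. Moreover, your key manoeuvre --- ``pulling $e_i$ across the balanced tensor product'' --- is not available: the balancing relations in $\mathcal A\widehat\otimes_{\mathfrak A^\sharp}\mathcal A$ only involve elements of $\mathfrak A^\sharp$, and $e_i\in\mathcal A$, while the algebraic centrality $ae_i=e_ia$ gives no control on the size of $a-ae_i$ against the unbounded factors above. So the first (and main) assertion is not proved by your argument.

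The paper's proof uses centrality in an entirely different way, with a finite-set construction that never needs norm bounds on the approximate identity: given $\varepsilon>0$ and a finite set $F$, choose $e_{i_1},e_{i_2}$ with $\|e_{i_1}a-a\|<\varepsilon/2$ and $\|e_{i_2}e_{i_1}a-e_{i_1}a\|<\varepsilon/2$ for $a\in F$, and consider the module derivation $D(a)=ae_{i_1}\otimes_{\mathfrak A^\sharp}e_{i_2}-e_{i_1}\otimes_{\mathfrak A^\sharp}e_{i_2}a$ into $X=\ker\overline{\omega}_{\mathcal A}$ (centrality is what places $D(a)$ in this kernel). Module approximate contractibility, obtained from approximate amenability via Corollary~3.6 of \cite{bp}, yields $u\in X$ with $\|D(a)-(a\cdot u-u\cdot a)\|<\varepsilon$ on $F$, and then $U=e_{i_1}\otimes_{\mathfrak A^\sharp}e_{i_2}-u$, indexed over the pairs $(\varepsilon,F)$, is a module approximate diagonal: the commutator estimate is exactly the approximate innerness of $D$, and $\overline{\omega}_{\mathcal A}(U)a-a=e_{i_1}e_{i_2}a-a$ is small by the choice of $e_{i_1},e_{i_2}$. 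Because $e_{i_1}\otimes e_{i_2}$ is fixed before the correction $u$ is chosen, no product of unbounded quantities ever appears. Your handling of the ``furthermore'' clause (essentiality identifying $\mathfrak A$- and $\mathfrak A^\sharp$-module derivations, then repeating the first part) is in the right spirit and matches the paper's brief remark, but it only helps once the first part is established by an argument of this kind.
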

\begin{proof}
Let $(e_i)$ be the central approximate identity for $\mathcal A$. Given $\varepsilon>0$ and finite set $F\subset\mathcal A$, choose $e_{i_1},e_{i_2}\in(e_i)$ such that for every $a\in F$
\begin{equation*}
    \|e_{i_1}a-a\|<\varepsilon/2\quad \text{and}\quad\|e_{i_2}e_{i_1}a-e_{i_1}a\|<\varepsilon/2.
\end{equation*}
Consider $X=\ker\overline{\omega}_\mathcal A$ as a commutative $\mathcal A$-$\mathfrak{A}^\sharp$-module. Define $D:\mathcal A\longrightarrow X$ by $$D(a)=ae_{i_1}\otimes_{\mathfrak{A}^\sharp}e_{i_2}-e_{i_1}\otimes_{\mathfrak{A}^\sharp}e_{i_2}a,$$
for all $a\in\mathcal A$. Clearly, $D$ is a $\mathfrak{A}^\sharp$-module derivation. Our assumption implies that $\mathcal A$ is  $\mathfrak{A}^\sharp$-module approximately contractible (\cite[Corollary 3.6]{bp}). Thus, there exists $x=x(e_{i_1},e_{i_2},\varepsilon, F)\in X$ such that $\|D(a)-(a\cdot u-u\cdot a)\|<\varepsilon$, for all $a\in F$. Put $U=e_{i_1}\otimes_{\mathfrak{A}^\sharp} e_{i_2}-u$. Obviously, $U\in\mathcal A\otimes_{\mathfrak{A}^\sharp}\mathcal A$ and $$a\cdot U-U\cdot a\rightarrow0, \hspace{1cm}\overline{\omega}(U)a-a\rightarrow0,$$
for all $a\in\mathcal A$. This means that $(U)$ is a module central approximate diagonal for $\mathcal A$. The rest of the proof is clear by repeating the above statements and using Corollary 3.6 in \cite{bp}.
\end{proof}

In general case, $\mathcal J\cdot(\mathcal A \widehat \otimes \mathcal A)$ is not a subset of $\mathcal I$ and thus $(\mathcal A \widehat \otimes \mathcal A)/{\mathcal I}$ is not always an $\mathcal A/\mathcal J$-module. We say the Banach algebra ${\mathfrak A}$ acts trivially on
$\mathcal A$ from left (right) if there is a continuous linear
functional $f$ on ${\mathfrak A}$ such that  $\alpha\cdot
a=f(\alpha)a$ ($a\cdot\alpha=f(\alpha)a$), for each $\alpha\in
\mathfrak A$ and $a\in \mathcal A$ (see also \cite {abe}). The following lemma is proved in \cite[Lemma 3.13]{bab}.

\begin{lemm} \label{tpc2} {\it If ${\mathfrak
A}$ acts on $\mathcal A$ trivially from the left or right and
$\mathcal A/\mathcal J$ has a right bounded approximate identity, then for
each $\alpha \in {\mathfrak A}$ and $a\in\mathcal A$ we have
$f(\alpha)a-a\cdot\alpha \in \mathcal J$.}
\end{lemm}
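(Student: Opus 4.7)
\emph{Proof plan.} The statement splits according to which side the trivial action occurs on.

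If $\mathfrak{A}$ acts on $\mathcal{A}$ trivially from the right, then by definition $a\cdot\alpha=f(\alpha)a$, so $f(\alpha)a-a\cdot\alpha=0\in\mathcal{J}$ and there is nothing to check.

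The substantive case is the left trivial action, where $\alpha\cdot b=f(\alpha)b$ for every $\alpha\in\mathfrak{A}$ and $b\in\mathcal{A}$. My first step would be to substitute this identity into a generic generator of $\mathcal{J}$: for any $a,b\in\mathcal{A}$ and $\alpha\in\mathfrak{A}$,
\[
(a\cdot\alpha)b-a(\alpha\cdot b)=(a\cdot\alpha)b-f(\alpha)ab=\bigl(a\cdot\alpha-f(\alpha)a\bigr)b\in\mathcal{J}.
\]
Setting $c:=a\cdot\alpha-f(\alpha)a\in\mathcal{A}$, this says $cb\in\mathcal{J}$ for every $b\in\mathcal{A}$, equivalently $(c+\mathcal{J})(b+\mathcal{J})=0$ in $\mathcal{A}/\mathcal{J}$ for every $b$.

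The second step exploits the hypothesis on the quotient. Let $(e_i+\mathcal{J})$ be a right bounded approximate identity of $\mathcal{A}/\mathcal{J}$. Then $(c+\mathcal{J})(e_i+\mathcal{J})=0$ for each $i$ by the previous step, while $(c+\mathcal{J})(e_i+\mathcal{J})\to c+\mathcal{J}$ by the defining property of a right approximate identity. Consequently $c+\mathcal{J}=0$, that is $f(\alpha)a-a\cdot\alpha\in\mathcal{J}$, which is the claim.

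No serious obstacle is in play. The only conceptual point is to notice that, under the trivial left action, the generators of $\mathcal{J}$ collapse into the factored form $(a\cdot\alpha-f(\alpha)a)b$; after that, the right approximate identity on $\mathcal{A}/\mathcal{J}$ serves precisely to cancel the trailing factor $b$. Boundedness of the approximate identity is not needed beyond the convergence $(c+\mathcal{J})(e_i+\mathcal{J})\to c+\mathcal{J}$ in the quotient norm.
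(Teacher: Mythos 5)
Your proof is correct: the paper itself offers no argument for this lemma, simply citing \cite[Lemma 3.13]{bab}, and your route --- factoring the generators as $(a\cdot\alpha)b-a(\alpha\cdot b)=(a\cdot\alpha-f(\alpha)a)b\in\mathcal J$ and then cancelling the trailing factor against a right approximate identity of $\mathcal A/\mathcal J$ --- is exactly the standard argument behind that cited result. Your side remarks are also accurate: the right-trivial case is vacuous as the statement is literally written, and only the approximate-identity property (not its boundedness) is used.
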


We note that if all conditions of Lemma \ref{tpc2} hold, then $\mathcal A/\mathcal J$ is a commutative ${\mathfrak A}$-module. We now show that when $\mathcal A/\mathcal J$ has a right bounded approximate identity,
then $(\mathcal A \widehat \otimes \mathcal A)/{\mathcal I}$  is $\mathcal A/\mathcal J$-module if ${\mathfrak A}$ acts on $\mathcal A$ trivially
from left or right. For the case of the trivial left action,
consider the following actions
$$(a+\mathcal J)\cdot(b\otimes c+\mathcal I)=ab\otimes c+\mathcal I,\quad (b\otimes c+\mathcal I)\cdot (a+\mathcal J)=b\otimes ca+\mathcal I.$$
For $a,b,c\in \mathcal A$ and $\alpha\in \mathfrak A$, we have
$$[a\cdot\alpha-f(\alpha)a]\cdot(b\otimes c)=(a\cdot\alpha)b\otimes c-f(\alpha)ab\otimes
c=(a\cdot\alpha)b\otimes c-ab\otimes \alpha\cdot c\in \mathcal I.$$ Thus
left action is well-defined. Similarly, one can show that the
right action is also well-defined.

\begin{prop}\label{pppp} Let ${\mathfrak A}$ act trivially on
$\mathcal A$ from left \emph{(}or right\emph{)} such that each approximate identity of $\mathcal A/\mathcal{J}$  is also an approximate
identity for $X$. If $\mathcal A$ is $\mathfrak{A}$-module pseudo-amenable and $X$ be a commutative Banach $\mathcal A$-$\mathfrak{A}$-module, then
\begin{itemize}
  \item[(i)] every continuous module derivation from $\mathcal A/\mathcal{J}$ into $X$, is approximately inner.
  \item[(ii)] every continuous module derivation from $\mathcal A/\mathcal{J}$ into $X^*$, is $w^*$-approximately inner.
\end{itemize}
\end{prop}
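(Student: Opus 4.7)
The plan is to lift $D$ to a bounded $\mathfrak A$-module map on $\A\widehat{\otimes}_{\mathfrak A}\A$ and then apply it to a module approximate diagonal to produce an implementing net of elements of $X$ (resp.\ $X^*$).

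Concretely, define $\Psi:\A\widehat{\otimes}\A\to X$ on elementary tensors by $\Psi(a\otimes b)=D(a+\J)\cdot b$ and extend by continuity; boundedness is immediate from that of $D$. The first nontrivial step is to verify that $\Psi$ annihilates the ideal $\mathcal I$ and so descends to $\A\widehat{\otimes}_{\mathfrak A}\A$. In the case of trivial left action, Lemma \ref{tpc2} gives $a\cdot\alpha-f(\alpha)a\in\J$, whence $D(a\cdot\alpha+\J)=f(\alpha)D(a+\J)$; combined with $\alpha\cdot b=f(\alpha)b$ this forces $\Psi$ to vanish on every generator $a\cdot\alpha\otimes b-a\otimes\alpha\cdot b$, and the right-trivial case is symmetric. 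Secondly, the $\A$-$\mathfrak A$ compatibility axioms on $X$ force $(a\cdot\alpha)b\cdot x=a(\alpha\cdot b)\cdot x$ for every $x\in X$, so $\J\cdot X=X\cdot\J=0$ and $X$ inherits a well-defined $\A/\J$-bimodule structure through which the expression $D(a+\J)\cdot\widetilde{\omega}_{\A}(\widetilde u_j)$ becomes meaningful.

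Now fix a module approximate diagonal $(\widetilde u_j)$ and set $\xi_j=\Psi(\widetilde u_j)\in X$. Using the Leibniz rule for $D$ on $\A/\J$ together with the bimodule identities on $X$, a direct computation gives
\[
\Psi(\widetilde u_j\cdot a-a\cdot\widetilde u_j)=\xi_j\cdot a-a\cdot\xi_j-D(a+\J)\cdot\widetilde{\omega}_{\A}(\widetilde u_j)\qquad (a\in\A).
\]
The left-hand side tends to zero in norm because $\Psi$ is bounded and $\widetilde u_j\cdot a-a\cdot\widetilde u_j\to 0$. By hypothesis, $\widetilde{\omega}_{\A}(\widetilde u_j)$ is an approximate identity for $X$, so $D(a+\J)\cdot\widetilde{\omega}_{\A}(\widetilde u_j)\to D(a+\J)$ in norm. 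Setting $\eta_j=-\xi_j$ we obtain $D(a+\J)=\lim_j(a\cdot\eta_j-\eta_j\cdot a)$, which proves (i).

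For (ii), run the identical construction with $\Psi$ now valued in $X^*$. The only modification is in the final limit: if $y\in X^*$ and $(e_j)$ is an approximate identity for $X$, then $\langle y\cdot e_j,x\rangle=\langle y,e_j\cdot x\rangle\to\langle y,x\rangle$ for every $x\in X$, so $y\cdot e_j\xrightarrow{w^*}y$; applied to $y=D(a+\J)$, this yields $D(a+\J)=w^*\text{-}\lim_j(a\cdot\eta_j-\eta_j\cdot a)$. I expect the main obstacle to be the bookkeeping in the first technical step, namely verifying the descent of $\Psi$ to the module tensor product, since this is exactly where the trivial action hypothesis enters through Lemma \ref{tpc2}, together with the observation that $\J$ automatically acts trivially on any $\A$-$\mathfrak A$-bimodule; once these are in place, the final computation is purely formal.
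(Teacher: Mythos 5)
Your proof is correct, but it takes a different route from the paper's own argument for Proposition \ref{pppp}. The paper fixes a representation $\widetilde u_j=\sum_i a_i^{(j)}\otimes_{\mathfrak A}b_i^{(j)}$, puts $\xi_j=\sum_i D(a_i^{(j)}b_i^{(j)}+\mathcal J)$, and extracts the key relation $(a+\mathcal J)\cdot\xi_j-\xi_j\cdot(a+\mathcal J)+D(a+\mathcal J)\cdot\widetilde\omega_{\mathcal A}(\widetilde u_j)\to 0$ from two Leibniz expansions, (\ref{12})--(\ref{14}); you instead transplant the device from the paper's proof of Theorem \ref{t1}: the bounded map $\Psi(a\otimes b)=D(a+\mathcal J)\cdot b$ evaluated on the diagonal, with $\xi_j=\Psi(\widetilde u_j)$, so your implementing net differs from the paper's by the term $\sum_i(a_i^{(j)}+\mathcal J)\cdot D(b_i^{(j)}+\mathcal J)$. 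Both arguments finish identically, using that $\widetilde\omega_{\mathcal A}(\widetilde u_j)$, being an approximate identity of $\mathcal A/\mathcal J$, is one for $X$ (norm convergence in (i), $w^*$ in (ii)). What your organization buys: the single identity $\Psi(\widetilde u_j\cdot a-a\cdot\widetilde u_j)=\xi_j\cdot a-a\cdot\xi_j-D(a+\mathcal J)\cdot\widetilde\omega_{\mathcal A}(\widetilde u_j)$ replaces the chain of limits culminating in (\ref{14}), and your explicit verification that $\mathcal J\cdot X=X\cdot\mathcal J=0$ follows from the compatibility axioms is exactly what legitimizes all the $\mathcal A/\mathcal J$-module expressions, a point the paper leaves implicit. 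Two small remarks on your descent step: Lemma \ref{tpc2} is stated for a \emph{bounded} right approximate identity of $\mathcal A/\mathcal J$, while module pseudo-amenability only supplies a possibly unbounded one; this is harmless, since the conclusion $f(\alpha)a-a\cdot\alpha\in\mathcal J$ only requires multiplying the fixed element $f(\alpha)a-a\cdot\alpha$ by the approximate identity of $\mathcal A/\mathcal J$ and using that $\mathcal J$ is closed. Also, in the right-trivial case the statement you need is the left-handed analogue $f(\alpha)b-\alpha\cdot b\in\mathcal J$ (obtained from a left approximate identity) combined with $X\cdot\mathcal J=0$, rather than a literally symmetric citation. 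In fact the descent of $\Psi$ needs no trivial-action hypothesis at all: since $D$ is an $\mathfrak A$-module map, $D(a\cdot\alpha+\mathcal J)\cdot b=\bigl(D(a+\mathcal J)\cdot\alpha\bigr)\cdot b=D(a+\mathcal J)\cdot(\alpha\cdot b)$ by the compatibility axiom $(x\cdot\alpha)\cdot b=x\cdot(\alpha\cdot b)$, so your argument actually establishes the conclusion without that hypothesis.
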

\begin{proof}
 Let $(\widetilde{u}_j)\subseteq\mathcal A\widehat{\otimes}_\mathfrak{A}\mathcal A$ be a module approximate diagonal for $\mathcal A$ and $(\widetilde{{\omega}}_\mathcal A(\widetilde{u}_j))$ be a right and left approximate identity for $X$.

 (i) Let $D:\mathcal A/\mathcal{J}\longrightarrow X$ be a continuous module derivation. Assume $\widetilde{u}_j=\sum_ia_i^{(j)}\otimes_\mathfrak{A}b_i^{(j)}=\sum_i(a_i^{(j)}\otimes b_i^{(j)}+\mathcal{I})$ and let $\xi_j=\sum_iD(a_i^{(j)}b_i^{(j)}+\mathcal{J})$. Then
\begin{eqnarray}\label{12}
 \nonumber
  D((a+\mathcal{J})(a_{i}^{(j)} b_{i}^{(j)}+\mathcal{J})) &=& D(a+\mathcal{J})\cdot(a_{i}^{(j)} b_{i}^{(j)}+\mathcal{J})+(a+\mathcal{J})\cdot D(a_{i}^{(j)} b_{i}^{(j)}+\mathcal{J}) \\
  \nonumber
   &=&  D(a+\mathcal{J})\cdot(a_{i}^{(j)} b_{i}^{(j)}+\mathcal{J})+(a+\mathcal{J})\cdot D(a_{i}^{(j)}+\mathcal{J})\cdot(b_{i}^{(j)}+\mathcal{J})\\
   &&  +(aa_{i}^{(j)}+\mathcal{J})\cdot D(b_{i}^{(j)}+\mathcal{J}),
\end{eqnarray}
and
\begin{eqnarray}\label{123}
 \nonumber
  D((a_{i}^{(j)} b_{i}^{(j)}+\mathcal{J})(a+\mathcal{J})) &=& D(a_{i}^{(j)} b_{i}^{(j)}+\mathcal{J})\cdot (a+\mathcal{J})+(a_{i}^{(j)} b_{i}^{(j)}+\mathcal{J})\cdot D(a+\mathcal{J}) \\
  \nonumber
   &=&  D(a_{i}^{(j)}+\mathcal{J}) \cdot(b_{i}^{(j)}a+\mathcal{J})+ (a_{i}^{(j)}+\mathcal{J})\cdot D(b_{i}^{(j)}+\mathcal{J})\cdot(a+\mathcal{J})\\
   &&  +(a_{i}^{(j)} b_{i}^{(j)}+\mathcal{J})\cdot D(a+\mathcal{J}).
\end{eqnarray}
For every $a\in\mathcal A$, we have

$$(a+\mathcal{J})\widetilde{\omega}_\mathcal A(\sum_i(a_i^{(j)}\otimes_\mathfrak{A}b_i^{(j)}+\mathcal{I}))-\widetilde{\omega}_\mathcal A(\sum_i(a_i^{(j)}
\otimes_\mathfrak{A}b_i^{(j)}+\mathcal{I}))(a+\mathcal{J})
\rightarrow0.$$
Due to the continuity of $D$
\begin{equation}\label{22}
    D((a+\mathcal{J})\widetilde{\omega}_\mathcal A(\sum_i(a_i^{(j)}\otimes_\mathfrak{A}b_i^{(j)}+\mathcal{I}))-\widetilde{\omega}_\mathcal A(\sum_i(a_i^{(j)}
\otimes_\mathfrak{A}b_i^{(j)}+\mathcal{I}))(a+\mathcal{J}))
\rightarrow0,
\end{equation}
for all $a\in\mathcal A$. Since $\widetilde{\omega}_\A(\widetilde{u}_j)$ is left approximate identity for $X$ and approximate identity for $\mathcal A/\mathcal{J}$, by (\ref{123}), we get
\begin{equation}\label{14}
    D(a_{i}^{(j)}+\mathcal{J})\cdot(b_{i}^{(j)}a+\mathcal{J}) +(a_{i}^{(j)}+\mathcal{J})\cdot D(b_{i}^{(j)}+\mathcal{J})\cdot(a+\mathcal{J})\rightarrow0.
\end{equation}
From (\ref{12}), (\ref{22}) and (\ref{14}) we deduce that
\begin{equation}\label{}
  (a+\mathcal{J})\cdot\xi_j-\xi_j\cdot(a+\mathcal{J})+D(a+\mathcal{J})\cdot\widetilde{{\omega}}_\mathcal A(\widetilde{u}_j)\rightarrow0.
\end{equation}
Again by our assumption that $(\widetilde{{\omega}}_\mathcal A(\widetilde{u}_j))$ is right approximate identity for $X$, we conclude that  $D(a+\mathcal{J})\widetilde{{\omega}}_\mathcal A(\widetilde{u}_j)\longrightarrow D(a+\mathcal{J})$, for all $a\in\mathcal A$. This implies
$$ D(a+\mathcal{J})=\lim_j \xi_j\cdot(a+\mathcal{J})-(a+\mathcal{J})\cdot\xi_j, (a\in\mathcal A).$$

(ii) Let $D:\mathcal A\longrightarrow X^*$ be a module derivation. Setting $\xi_j=\sum_iD(a_i^{(i)}\otimes b_i^{(i)}+\mathcal{J})$, we have
$$(a+\mathcal{J})\cdot\xi_j-\xi_j\cdot(a+\mathcal{J})+D(a+\mathcal{J})\widetilde{{\omega}}_\mathcal A(\widetilde{u}_j)\rightarrow0.$$
It follows from the property of $(\widetilde{\omega}(\widetilde{u}_j))$ that $\widetilde{\omega}_{\mathcal A}(\widetilde{u}_j)\cdot D(a)\stackrel{w^*}{\longrightarrow}D(a)$. Then
$$ D(a+\mathcal{J})=\lim_j \xi_j\cdot(a+\mathcal{J})-(a+\mathcal{J})\cdot\xi_j,$$
for all $a\in\mathcal A$.
\end{proof}

The next consequence was proved in \cite{gz}, but the net $(\xi_\alpha)$ which is defined in its proof does not satisfy in the condition $a\xi_\alpha-\xi_\alpha a- D(a)\pi(u_\alpha)\rightarrow0$. We point out that this result can be deduced as a corollary of Proposition \ref{pppp}.

\begin{cor}
Let $\mathcal A$ be a pseudo-amenable Banach algebra, and let $X$ be a Banach $\mathcal A$-bimodule
such that each approximate identity of $A$ is also an approximate
identity for X. Then:
\begin{itemize}
  \item[(i)] every continuous module derivation from $\mathcal A$ into $X$, is approximately inner.
  \item[(ii)] every continuous module derivation from $\mathcal A$ into $X^*$, is $w^*$-approximately inner.
\end{itemize}
\end{cor}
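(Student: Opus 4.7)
The plan is to deduce the corollary as a direct specialization of Proposition~\ref{pppp}, by choosing the auxiliary Banach algebra $\mathfrak A$ so that the ``module'' structure collapses to the ordinary bimodule structure. Concretely, I would take $\mathfrak A=\mathbb{C}$ and declare the left and right actions of $\mathfrak A$ on both $\mathcal A$ and $X$ to be identically zero; equivalently, the continuous functional $f$ in the definition of trivial action is set to be the zero functional on $\mathfrak A$.

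Next, I would verify the routine identifications that make Proposition~\ref{pppp} directly applicable. With all actions of $\mathfrak A$ equal to zero, every generator of $\mathcal J_{\mathcal A}$ and of $\mathcal I_{\mathcal A}$ is the zero element, so $\mathcal J_{\mathcal A}=\{0\}=\mathcal I_{\mathcal A}$. Hence $\mathcal A/\mathcal J=\mathcal A$, $\mathcal A\widehat{\otimes}_{\mathfrak A}\mathcal A=\mathcal A\widehat{\otimes}\mathcal A$, and $\widetilde{\omega}_{\mathcal A}=\omega_{\mathcal A}$. Under these identifications Definition~\ref{def2} reduces to the usual Ghahramani--Zhang notion of pseudo-amenability; any Banach $\mathcal A$-bimodule $X$ is automatically a commutative $\mathcal A$-$\mathfrak A$-module; and every bounded derivation $D\colon\mathcal A\longrightarrow X$ (respectively $D\colon\mathcal A\longrightarrow X^{*}$) automatically satisfies the $\mathfrak A$-module axioms, since both $D(\alpha\cdot a)$ and $\alpha\cdot D(a)$ vanish, and likewise on the right. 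The remaining hypothesis of the corollary, that each approximate identity of $\mathcal A$ is also an approximate identity for $X$, transports verbatim to the corresponding hypothesis of the proposition.

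Invoking parts~(i) and (ii) of Proposition~\ref{pppp} then yields the two conclusions of the corollary. I expect no substantive obstacle here: the only points requiring a moment's care are checking that the trivial-action definition permits the choice $f\equiv 0$ (no nondegeneracy is imposed), and observing that the adjective ``module'' in the statement of the corollary is redundant in this trivialized setting, so that the result indeed recovers the classical statement alluded to in the preceding remark about \cite{gz}.
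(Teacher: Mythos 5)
Your proposal is correct and matches the paper's intent: the paper offers no separate argument for this corollary beyond the remark that it follows from Proposition~\ref{pppp}, and your trivialization ($\mathfrak A=\mathbb{C}$ with zero actions, so $\mathcal I=\mathcal J=\{0\}$, $\mathcal A\widehat{\otimes}_{\mathfrak A}\mathcal A=\mathcal A\widehat{\otimes}\mathcal A$ and module pseudo-amenability collapses to pseudo-amenability) is exactly the routine specialization needed. The only detail worth recording is that the two-sidedness of the approximate identity $\omega_{\mathcal A}(u_j)$ required by Definition~\ref{def2} follows from $a\cdot u_j-u_j\cdot a\to 0$ applied to $\omega_{\mathcal A}$, which is immediate.
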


\begin{prop}\label{p}
Let $\mathcal A$ and $\mathcal B$ be Banach ${\mathfrak
A}$-modules. If there is a continuous ${\mathfrak A}$-module
epimorphism from $\mathcal A$ onto $\mathcal B$ and $\mathcal A$
is module pseudo-amenable, so is $\mathcal B$. In particular, the
quotient algebra $\mathcal A/I$ is module pseudo-amenable for any
two-sided closed ideal $I$ of $\mathcal A$.
\end{prop}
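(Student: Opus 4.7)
The plan is to push forward a module approximate diagonal of $\mathcal A$ through the given epimorphism. Write $\varphi\colon\mathcal A\to\mathcal B$ for the continuous $\mathfrak A$-module algebra epimorphism, so that $\varphi$ simultaneously respects the left and right $\mathfrak A$-actions and the multiplications. A direct calculation on the defining generators shows that $\varphi\otimes\varphi$ carries the subspace $\mathcal I_{\mathcal A}\subset\mathcal A\widehat\otimes\mathcal A$ into $\mathcal I_{\mathcal B}$, and that $\varphi$ sends the ideal $\mathcal J_{\mathcal A}\subset\mathcal A$ into $\mathcal J_{\mathcal B}$. Consequently there are induced continuous maps
\[
\tilde\varphi\colon\mathcal A\widehat\otimes_{\mathfrak A}\mathcal A\to\mathcal B\widehat\otimes_{\mathfrak A}\mathcal B,\qquad \bar\varphi\colon\mathcal A/\mathcal J_{\mathcal A}\to\mathcal B/\mathcal J_{\mathcal B},
\]
with $\bar\varphi$ a surjective $\mathfrak A$-module algebra homomorphism. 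Checking on elementary tensors, $\tilde\varphi(a\cdot u)=\varphi(a)\cdot\tilde\varphi(u)$ and $\tilde\varphi(u\cdot a)=\tilde\varphi(u)\cdot\varphi(a)$ for $a\in\mathcal A$, $u\in\mathcal A\widehat\otimes_{\mathfrak A}\mathcal A$, and the intertwining identity
\[
\widetilde\omega_{\mathcal B}\circ\tilde\varphi=\bar\varphi\circ\widetilde\omega_{\mathcal A}
\]
holds.

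Now let $\{\widetilde u_j\}$ be a module approximate diagonal for $\mathcal A$, and set $\widetilde v_j:=\tilde\varphi(\widetilde u_j)\in\mathcal B\widehat\otimes_{\mathfrak A}\mathcal B$. For any $b\in\mathcal B$, surjectivity of $\varphi$ yields $a\in\mathcal A$ with $\varphi(a)=b$; using the intertwining identity and continuity of $\bar\varphi$,
\[
(b+\mathcal J_{\mathcal B})\,\widetilde\omega_{\mathcal B}(\widetilde v_j)=\bar\varphi\bigl((a+\mathcal J_{\mathcal A})\,\widetilde\omega_{\mathcal A}(\widetilde u_j)\bigr)\longrightarrow \bar\varphi(a+\mathcal J_{\mathcal A})=b+\mathcal J_{\mathcal B},
\]
and symmetrically on the right, so $\widetilde\omega_{\mathcal B}(\widetilde v_j)$ is an approximate identity for $\mathcal B/\mathcal J_{\mathcal B}$. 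The centrality condition follows from
\[
b\cdot\widetilde v_j-\widetilde v_j\cdot b=\tilde\varphi\bigl(a\cdot\widetilde u_j-\widetilde u_j\cdot a\bigr)\longrightarrow 0,
\]
by boundedness of $\tilde\varphi$. Hence $\{\widetilde v_j\}$ is a module approximate diagonal for $\mathcal B$, which is therefore module pseudo-amenable. The ``in particular'' assertion is immediate on taking $\varphi$ to be the quotient map $\mathcal A\to\mathcal A/I$, which is patently a continuous $\mathfrak A$-module algebra epimorphism.

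The only real work is the well-definedness of $\tilde\varphi$ and $\bar\varphi$; the principal (mild) obstacle is the bookkeeping needed to confirm that the generators of $\mathcal I_{\mathcal A}$ and $\mathcal J_{\mathcal A}$ are mapped into $\mathcal I_{\mathcal B}$ and $\mathcal J_{\mathcal B}$ respectively. This reduces to unpacking that $\varphi$ is both an algebra homomorphism and an $\mathfrak A$-bimodule map, which is precisely the hypothesis.
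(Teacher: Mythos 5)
Your proof is correct and follows essentially the same route as the paper: both push a module approximate diagonal of $\mathcal A$ forward through the induced map $\overline{\phi\otimes\phi}$ on the module projective tensor products, the paper stating this in one line and you supplying the well-definedness, intertwining, and convergence details (using surjectivity for the approximate identity condition).
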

\begin{proof}
Assume that $\phi:\mathcal A\longrightarrow \mathcal B$ is a continuous ${\mathfrak A}$-module
epimorphism. Then the
map $\overline{\phi\otimes\phi}:{\mathcal A}\widehat \otimes
_{\mathfrak A} {\mathcal A}\cong({\mathcal A}\widehat
\otimes{\mathcal A})/\mathcal I_{\mathcal A}\longrightarrow
{\mathcal B}\widehat \otimes _{\mathfrak A} {\mathcal
B}\cong({\mathcal B}\widehat \otimes{\mathcal B})/\mathcal
I_{\mathcal B}$ defined by $\overline{\phi\otimes\phi}(a\otimes
b+\mathcal I_{\mathcal A})=\phi(a)\otimes\phi(b)+\mathcal
I_{\mathcal B}$ takes any module approximate diagonal for
$\mathcal A$ to a module approximate diagonal for $\mathcal B$.
\end{proof}

Recall that the  convex hull of a subset $A$ of a normed space $X$, denoted by ${\rm{co}}(A)$, is the intersection of all convex sets in $X$ that contains $A$.
\begin{theo} \label{t4}
Let $\mathcal A$ be a Banach $\mathfrak A$-bimodule with compatible actions. Then the following statements are equivalent:
\begin{enumerate}
\item[$\text{(i)}$] {$\mathcal A$ is module approximately amenable;}
\item[$\text{(ii)}$] {For any commutative $\mathcal A$-$\mathfrak A$-module $X$, every bounded derivation $D:\mathcal A\longrightarrow X^{**}$ is approximately inner.}
\end{enumerate}
\end{theo}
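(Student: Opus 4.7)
For (i)$\Rightarrow$(ii) I would simply observe that whenever $X$ is a commutative Banach $\mathcal{A}$-$\mathfrak{A}$-module, $X^{*}$ is again a commutative Banach $\mathcal{A}$-$\mathfrak{A}$-module, so $X^{**}=(X^{*})^{*}$ is a dual commutative $\mathcal{A}$-$\mathfrak{A}$-module. The definition of module approximate amenability then applies verbatim to module derivations with values in $X^{**}$.

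The real content lies in (ii)$\Rightarrow$(i), where my plan is a standard \emph{second-dual reduction} via the canonical projection from the triple dual back to the single dual. Given any commutative Banach $\mathcal{A}$-$\mathfrak{A}$-module $Y$ and a bounded module derivation $D:\mathcal{A}\to Y^{*}$, I would set $X:=Y^{*}$, which is again a commutative Banach $\mathcal{A}$-$\mathfrak{A}$-module, and compose $D$ with the canonical isometric embedding $\iota:Y^{*}\hookrightarrow Y^{***}=X^{**}$. Applying hypothesis (ii) to the bounded module derivation $\widetilde{D}:=\iota\circ D$ then yields a net $(\Phi_{j})\subseteq Y^{***}$ with
$$\widetilde{D}(a)=\lim_{j}\bigl(a\cdot\Phi_{j}-\Phi_{j}\cdot a\bigr)\qquad(a\in\mathcal{A}).$$

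To return to $Y^{*}$ I would use the canonical contractive projection $P:Y^{***}\to Y^{*}$ given by $P(\Phi)(y)=\Phi(\widehat{y})$, where $\widehat{y}\in Y^{**}$ denotes evaluation at $y$. One checks that $P\circ\iota=\mathrm{id}_{Y^{*}}$ and that $P$ is an $\mathcal{A}$-$\mathfrak{A}$-bimodule map, so applying $P$ to the displayed limit produces $D(a)=\lim_{j}\bigl(a\cdot\xi_{j}-\xi_{j}\cdot a\bigr)$ with $\xi_{j}:=P(\Phi_{j})\in Y^{*}$, which exhibits $D$ as approximately inner.

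The only delicate step is the $\mathcal{A}$-$\mathfrak{A}$-bimodule naturality of $P$, namely $P(a\cdot\Phi)=a\cdot P(\Phi)$, $P(\Phi\cdot a)=P(\Phi)\cdot a$, and the analogous identities for $\alpha\in\mathfrak{A}$. I expect this to be the main (but purely mechanical) obstacle: it reduces to a direct computation using the usual rule that each dual action moves to the opposite side one level down, together with commutativity of $Y$ as an $\mathfrak{A}$-module, which keeps the $\mathfrak{A}$-actions on $Y^{*}$, $Y^{**}$, $Y^{***}$ compatible. No amenability hypothesis is used in this verification.
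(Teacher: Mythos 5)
Your proof is correct, and the interesting direction (ii)$\Rightarrow$(i) follows a genuinely different route from the paper's (the direction (i)$\Rightarrow$(ii) is the same trivial observation in both: $X^{**}=(X^{*})^{*}$ is a dual commutative module). You exploit that the target in (i) is already a dual module: for $D:\mathcal A\to Y^{*}$ you pass to $\iota\circ D:\mathcal A\to Y^{***}=(Y^{*})^{**}$, apply (ii) to $X=Y^{*}$ (legitimate, since $Y^{*}$ inherits commutativity), and return via the canonical projection $P:Y^{***}\to Y^{*}$, which is indeed a contractive $\mathcal A$-$\mathfrak A$-bimodule map with $P\circ\iota=\mathrm{id}_{Y^{*}}$; the verification reduces to $\widehat{y}\cdot a=\widehat{y\cdot a}$ and $a\cdot\widehat{y}=\widehat{a\cdot y}$ in $Y^{**}$ (and similarly for $\alpha\in\mathfrak A$), so commutativity of $Y$ is needed only to make $Y^{*}$ an admissible test module for (ii), not for the bimodule property of $P$. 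Since $P$ is norm-continuous, the norm limits survive and $D(a)=\lim_j\bigl(a\cdot P(\Phi_j)-P(\Phi_j)\cdot a\bigr)$, as you say. The paper instead reduces (i) to the formally stronger claim that module derivations into the commutative module $X$ itself are approximately inner: it composes with $i:X\to X^{**}$, applies (ii), then uses Goldstine's theorem to replace the approximating elements of $X^{**}$ by elements of $X$ converging weakly, and Mazur's theorem (convex combinations, indexed by finite subsets of $\mathcal A$ and $\epsilon$) to upgrade weak convergence to norm convergence. Your route is shorter and avoids the Goldstine/Mazur passage and the attendant re-indexing of nets, but it handles only dual-valued derivations, i.e.\ exactly (i); the paper's argument, with no extra hypothesis, also gives approximate innerness of derivations into $X$ itself, a contractibility-type strengthening that the projection trick cannot reach, since a non-dual $X$ need not be the range of a bimodule retraction of $X^{**}$.
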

\begin{proof}
(i)$\Rightarrow$(ii)  This is trivial.

(ii)$\Rightarrow$(i)  It is sufficient to show that every module derivation $D:\mathcal A\longrightarrow X$ is approximately inner. If $i:X\longrightarrow X^{**}$ is the canonical embedding, then $\tilde{D}=i\circ D$ is a module derivation from $\mathcal A$ into $X^{**}$. By assumption, there exists a net $(\Phi_j)$ in $X^{**}$ such that
$$\tilde{D}(a)=\lim_j(a\cdot\Phi_j-\Phi_j\cdot a),\quad(a\in \mathcal A).$$
Take $\epsilon>0$ and finite sets $\mathcal F\subset \mathcal A$, $E\subset X^*$. Then there is a $j$ such that
$$|\langle \tilde{D}(a)-(a\cdot\Phi_j-\Phi_j\cdot a), f\rangle|<\epsilon \qquad (f\in E, \, a\in \mathcal F).$$
By Goldstien's Theorem, there is a $x_j\in X$ such that
$$|\langle f, \tilde{D}(a)-(a\cdot x_j-x_j\cdot a)\rangle|<\epsilon \qquad (f\in E, \, a\in \mathcal F).$$
Therefore there is a net $(x_j)j\in \Gamma\subset X$ so that $D(a)= \lim_j(a\cdot x_j-x_j\cdot a)$ weakly in $X$. Now for each finite set $\mathcal F\subset \mathcal A$, say $\mathcal F=\{a_1, \ldots, a_k\}$,
$$(a_1\cdot x_j -x_j\cdot a_1, \ldots, a_k\cdot x_j -x_j \cdot a_k) \to (D(a_1), \ldots, D(a_k)),$$
weakly in $X^n$. Thus
$(D(a_1), \ldots, D(a_k))$ belongs to the weak closure of $$V={\rm{co}}\{(a_1\cdot x_j -x_j\cdot a_1, \ldots, a_k\cdot x_j -x_j \cdot a_k) : j\in \Gamma\}.$$
By Mazur's Theorem, $(D(a_1), \ldots, D(a_k))$ belongs to the norm closure of $V$. Hence, for each $\epsilon >0$, there is $u_{F, \epsilon}\in {\rm{co}}\{x_j\}$ such that
$$\|D(a)-(a\cdot u_{F, \epsilon}-u_{F, \epsilon}\cdot a)\|<\epsilon, \qquad (a\in \mathcal F).$$
The last inequality shows that $\mathcal A$ is module approximately amenable.
\end{proof}

We denote by $\square$ the first Arens product on
$\mathcal{A}^{**}$, the second dual of $\mathcal{A}$. Here and subsequently, $\mathcal A^{**}$ is equipped with the
first Arens product.

\begin{theo}\label{thmt}
Let $\mathcal A$ be a Banach $\mathfrak A$-module such that
$\mathcal A\hat{\otimes}_{\mathfrak A}\mathcal A$ is a commutative Banach
$\mathcal A$-$\mathfrak A$-module. Then module pseudo-amenability of $\mathcal A^{**}$ implies module pseudo-amenability of $\mathcal A$.
\end{theo}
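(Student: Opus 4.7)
The plan is to follow the same Goldstine--Mazur template as in the proof of Theorem \ref{t4}, but applied to the module approximate diagonal itself. Suppose $\mathcal A^{**}$ is module pseudo-amenable and let $(M_j)\subseteq\mathcal A^{**}\widehat{\otimes}_{\mathfrak A}\mathcal A^{**}$ be a module approximate diagonal, so that for every $a\in\mathcal A^{**}$ one has
\[
a\cdot M_j-M_j\cdot a\to 0 \quad\text{and}\quad (a+\mathcal J_{\mathcal A^{**}})\,\widetilde{\omega}_{\mathcal A^{**}}(M_j)\to a+\mathcal J_{\mathcal A^{**}}.
\]
The goal is to manufacture from $(M_j)$, together with the canonical embedding $\mathcal A\hookrightarrow\mathcal A^{**}$, a module approximate diagonal for $\mathcal A$ whose entries lie in $\mathcal A\widehat{\otimes}_{\mathfrak A}\mathcal A$.

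First I would use the hypothesis that $\mathcal A\widehat{\otimes}_{\mathfrak A}\mathcal A$ is a commutative Banach $\mathcal A$-$\mathfrak A$-module in order to equip $(\mathcal A\widehat{\otimes}_{\mathfrak A}\mathcal A)^{**}$ with an unambiguous $\mathcal A^{**}$-bimodule action coming from the first Arens product, which is $w^*$-continuous in each variable when restricted to elements of $\mathcal A$. Via the canonical pairing induced by $\mathcal A\hookrightarrow\mathcal A^{**}$, I would identify each $M_j$ with an element $\widehat M_j$ of $(\mathcal A\widehat{\otimes}_{\mathfrak A}\mathcal A)^{**}$ and translate the diagonal conditions into dualized statements of the form $\langle f,\,a\cdot\widehat M_j-\widehat M_j\cdot a\rangle\to 0$ for $f\in(\mathcal A\widehat{\otimes}_{\mathfrak A}\mathcal A)^{*}$ and $a\in\mathcal A$, and similarly for $\widetilde{\omega}_{\mathcal A}^{**}(\widehat M_j)$ against functionals in $(\mathcal A/\mathcal J)^{*}$.

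Next, by Goldstine's theorem the unit ball of $\mathcal A\widehat{\otimes}_{\mathfrak A}\mathcal A$ is $w^*$-dense in the corresponding ball of its second dual, so for each $j$ I can pick a net $(u_{j,\alpha})\subseteq\mathcal A\widehat{\otimes}_{\mathfrak A}\mathcal A$ that $w^*$-approximates $\widehat M_j$. Applying the dualized diagonal conditions to each fixed $a\in\mathcal A$ gives, weakly in $\mathcal A\widehat{\otimes}_{\mathfrak A}\mathcal A$ and $\mathcal A/\mathcal J$ respectively,
\[
a\cdot u_{j,\alpha}-u_{j,\alpha}\cdot a\xrightarrow{\,w\,}0 \quad\text{and}\quad \widetilde{\omega}_{\mathcal A}(u_{j,\alpha})(a+\mathcal J)-(a+\mathcal J)\xrightarrow{\,w\,}0.
\]
An iterated-limit construction over the directed set of pairs $(j,\alpha)$, exactly as in the passage from $(m_j)$ to $(n_k)$ in the proof of Theorem \ref{t1}, consolidates this into a single net.

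Finally I would apply Mazur's theorem as in the last step of the proof of Theorem \ref{t4}: for every finite $F=\{a_1,\dots,a_k\}\subset\mathcal A$ and every $\varepsilon>0$, the origin of $(\mathcal A\widehat{\otimes}_{\mathfrak A}\mathcal A)^{k}\times(\mathcal A/\mathcal J)^{k}$ lies in the weak closure of the convex set
\[
\mathrm{co}\bigl\{\bigl(a_l\cdot u_{j,\alpha}-u_{j,\alpha}\cdot a_l,\ \widetilde{\omega}_{\mathcal A}(u_{j,\alpha})(a_l+\mathcal J)-(a_l+\mathcal J)\bigr)_{1\le l\le k}\bigr\},
\]
hence also in its norm closure, yielding a convex combination $\widetilde u_{F,\varepsilon}\in\mathcal A\widehat{\otimes}_{\mathfrak A}\mathcal A$ with $\|a_l\cdot\widetilde u_{F,\varepsilon}-\widetilde u_{F,\varepsilon}\cdot a_l\|<\varepsilon$ and $\|\widetilde{\omega}_{\mathcal A}(\widetilde u_{F,\varepsilon})(a_l+\mathcal J)-(a_l+\mathcal J)\|<\varepsilon$ for every $a_l\in F$. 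Indexing by pairs $(F,\varepsilon)$ produces the desired module approximate diagonal. The principal obstacle will be the technical identification of $M_j\in\mathcal A^{**}\widehat{\otimes}_{\mathfrak A}\mathcal A^{**}$ with an element of $(\mathcal A\widehat{\otimes}_{\mathfrak A}\mathcal A)^{**}$ in a way that preserves the module-diagonal identities after dualization; the commutativity hypothesis on $\mathcal A\widehat{\otimes}_{\mathfrak A}\mathcal A$ is precisely what is needed to avoid Arens-product ambiguity and to keep the $\mathcal A$-actions $w^*$-continuous throughout this identification.
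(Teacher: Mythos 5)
Your proposal follows essentially the same route as the paper's proof: transfer the module approximate diagonal of $\mathcal A^{**}$ to a net in $({\mathcal A}\widehat{\otimes}_{\mathfrak A}{\mathcal A})^{**}$ satisfying the diagonal conditions against elements of $\mathcal A$, then descend to ${\mathcal A}\widehat{\otimes}_{\mathfrak A}{\mathcal A}$ by Goldstine and upgrade weak to norm convergence by Mazur, with an iterated-limit construction. The identification step you flag as the principal obstacle is precisely the part the paper disposes of by invoking the argument of \cite[Proposition 3.7]{bp}, so your outline is, in substance, the published proof with the Goldstine--Mazur step spelled out in more detail.
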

\begin{proof} According to the argument of the proof of \cite[Proposition 3.7]{bp}, we see that under our assumption there is a net $(\widetilde{m}_i)\subseteq
({\mathcal A}\widehat \otimes _{\mathfrak A} {\mathcal
A})^{**}$ such that $\tilde{\omega}_{\mathcal A}^{**}(\widetilde{m}_i)\cdot
(a+\mathcal J) \rightarrow a+\mathcal J$ and $a\cdot\widetilde{m}_i- \widetilde{m}_i\cdot a
\rightarrow 0$ for all $a\in\mathcal A$. Now, we can use Goldstien's Theorem to
obtain $(\widetilde{m}_i)$ in ${\mathcal A}\widehat \otimes _{\mathfrak A} {\mathcal
A}$, and we can replace weak$^*$ convergence in the above two limits by weak convergence. This implies, by Mazur's Theorem, that ${\mathcal A}$ is module
pseudo-amenable.
\end{proof}

Let $X$ and $Y$ be Banach spaces. Then the weak$^*$ operator topology on $B(X,Y^*)$
is the locally convex topology determined by the seminorms $\{p_{x,y}: x\in X, y\in Y\}$, where $p_{x,y}(\Phi) = \left\langle \Phi(x),y\right\rangle$.

Let $X$ and $Y$ be Banach ${\mathcal A}$-modules and Banach ${\mathfrak A}$-modules. A net $(T_k)$ of bounded maps from $X$ into $Y$, satisfying
\begin{equation}\label{a11}\|T_k (a \cdot x)-a \cdot T_k(x)\|\rightarrow 0,\quad \|T_k (x\cdot a)-T_k(x)\cdot a\|\rightarrow 0,\quad(a\in\mathcal A, x\in X),\end{equation}
\begin{equation}\label{a12}\|T_k (\alpha \cdot x)-\alpha \cdot T_k(x)\|\rightarrow 0,\quad \|T_k (x\cdot \alpha)-T_k(x)\cdot \alpha\|\rightarrow 0,\quad(\alpha\in\mathfrak A, x\in X),\end{equation}
is said to be an {\it module approximate morphism} from
$X$ to $Y$. If $Y$ is a dual Banach space, and we replace norm convergence
by $w^*$-convergence in (\ref{a11}) and (\ref{a12}) , we call $(T_k)$ a module $w^*$-approximate morphism. The following theorem is analogous to \cite[Theorem 2.4]{sam} in the case of module pseudo-amenability. We include the proof.

\begin{theo} \label{tt} Let $\mathcal A\widehat \otimes _{\mathfrak A} \mathcal A$ be commutative as an $\mathcal A$-$\mathfrak A$-module. Consider the following conditions:
\begin{enumerate}
\item[$\text{(i)}$] {$\mathcal A$ is module pseudo-amenable;}
\item[$\text{(ii)}$] {there is a module approximate morphism $(S_l)$ from $\mathcal A/\mathcal J$ into $\mathcal A\widehat \otimes _{\mathfrak A} \mathcal A$
such that
$$\|\tilde{\omega}_{\mathcal{A}}\circ S_l(a+\mathcal J)-(a+\mathcal J)\|\rightarrow 0\qquad(a\in \mathcal A);$$}
\item[$\text{(iii)}$] {there is a module $w^*$-approximate morphism $T_l:(\mathcal A\widehat \otimes _{\mathfrak A} \mathcal A)^*\longrightarrow(\mathcal A/\mathcal J)^*$ such that $\lim_l T_l\circ\tilde{\omega}_{\mathcal{A}}^*=id_{(\mathcal A/\mathcal J)^*}$ in weak$^*$ operator topology.}
\end{enumerate}

Then $\emph{(i)}\Rightarrow\emph{(ii)}\Rightarrow\emph{(iii)}$. In addition, if $\mathcal A/\mathcal J$ has a central approximate identity, then all assertions are equivalent.
\end{theo}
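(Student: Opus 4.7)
For $\text{(i)}\Rightarrow\text{(ii)}$, I would start from a module approximate diagonal $(\widetilde{u}_l)$ and define $S_l\colon\mathcal A/\mathcal J\to\mathcal A\widehat\otimes_{\mathfrak A}\mathcal A$ by $S_l(a+\mathcal J):=a\cdot\widetilde{u}_l$. The hypothesis that $\mathcal A\widehat\otimes_{\mathfrak A}\mathcal A$ is an $\mathcal A$-$\mathfrak A$-module forces $\mathcal J$ to act trivially on it: on a generator $(a\cdot\alpha)b-a(\alpha\cdot b)$ of $\mathcal J$, repeated use of the compatibility relations $c\cdot(\alpha\cdot x)=(c\cdot\alpha)\cdot x$ and $\alpha\cdot(c\cdot x)=(\alpha\cdot c)\cdot x$ collapses $((a\cdot\alpha)b)\cdot\widetilde{u}_l$ and $(a(\alpha\cdot b))\cdot\widetilde{u}_l$ to the common element $a\cdot(\alpha\cdot(b\cdot\widetilde{u}_l))$, so $S_l$ is well defined. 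The left $\mathcal A$-, left $\mathfrak A$- and (by commutativity of $\mathcal A\widehat\otimes_{\mathfrak A}\mathcal A$) right $\mathfrak A$-module identities for $S_l$ hold exactly, while the right $\mathcal A$-module defect
$$S_l((a+\mathcal J)\cdot b)-S_l(a+\mathcal J)\cdot b=a\cdot(b\cdot\widetilde{u}_l-\widetilde{u}_l\cdot b)\longrightarrow 0,$$
so $(S_l)$ is a module approximate morphism. Since $\widetilde{\omega}_{\mathcal A}$ is an $\mathcal A/\mathcal J$-module map and $\widetilde{\omega}_{\mathcal A}(\widetilde{u}_l)$ is an approximate identity for $\mathcal A/\mathcal J$, we obtain $\widetilde{\omega}_{\mathcal A}(S_l(a+\mathcal J))=(a+\mathcal J)\cdot\widetilde{\omega}_{\mathcal A}(\widetilde{u}_l)\to a+\mathcal J$, which is $\text{(ii)}$.

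For $\text{(ii)}\Rightarrow\text{(iii)}$ I set $T_l:=S_l^{*}$. Duality converts each norm-null morphism defect of $S_l$ into a pointwise weak$^{*}$-null defect of $T_l$; for instance
$$\langle T_l(a\cdot\phi)-a\cdot T_l(\phi),\,x\rangle=\langle\phi,\,S_l(x)\cdot a-S_l(x\cdot a)\rangle\longrightarrow 0$$
for every $\phi\in(\mathcal A\widehat\otimes_{\mathfrak A}\mathcal A)^{*}$, $a\in\mathcal A$, $x\in\mathcal A/\mathcal J$, so $(T_l)$ is a module $w^{*}$-approximate morphism. The factorisation $T_l\circ\widetilde{\omega}_{\mathcal A}^{*}=(\widetilde{\omega}_{\mathcal A}\circ S_l)^{*}$ together with the norm convergence from $\text{(ii)}$ immediately gives $T_l\circ\widetilde{\omega}_{\mathcal A}^{*}\to\mathrm{id}_{(\mathcal A/\mathcal J)^{*}}$ in the weak$^{*}$-operator topology.

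For the converse $\text{(iii)}\Rightarrow\text{(i)}$ under the central-approximate-identity hypothesis $(e_\alpha)$ on $\mathcal A/\mathcal J$, I would pass to the adjoint $T_l^{*}\colon(\mathcal A/\mathcal J)^{**}\to(\mathcal A\widehat\otimes_{\mathfrak A}\mathcal A)^{**}$ and set $v_{l,\alpha}:=T_l^{*}(\widehat e_\alpha)$, where $\widehat{(\cdot)}$ denotes the canonical embedding into the bidual. The identity $\widetilde{\omega}_{\mathcal A}^{**}\circ T_l^{*}=(T_l\circ\widetilde{\omega}_{\mathcal A}^{*})^{*}$ combined with the weak$^{*}$-operator convergence in (iii) gives, for each $\alpha$, $\widetilde{\omega}_{\mathcal A}^{**}(v_{l,\alpha})\to\widehat e_\alpha$ in the weak$^{*}$ topology as $l$ varies. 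Centrality enters in the commutator: since $a\cdot e_\alpha=e_\alpha\cdot a$, one has $\langle T_l(\phi),\,a\cdot e_\alpha-e_\alpha\cdot a\rangle=0$, and adding and subtracting the vanishing $w^{*}$-morphism errors of $T_l$ yields $a\cdot v_{l,\alpha}-v_{l,\alpha}\cdot a\to 0$ weak$^{*}$. An iterated-limit indexing of the parameters $\alpha,l$ analogous to that in the proof of Theorem \ref{t1} then produces a single net $(\widetilde m_k)\subset(\mathcal A\widehat\otimes_{\mathfrak A}\mathcal A)^{**}$ with $\widetilde{\omega}_{\mathcal A}^{**}(\widetilde m_k)\cdot(a+\mathcal J)\to a+\mathcal J$ and $a\cdot\widetilde m_k-\widetilde m_k\cdot a\to 0$ weak$^{*}$. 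Goldstine's theorem then pulls $(\widetilde m_k)$ back into $\mathcal A\widehat\otimes_{\mathfrak A}\mathcal A$, and Mazur's theorem upgrades the resulting weak convergence to norm convergence on convex combinations, exactly as in the proof of Theorem \ref{thmt}, producing the required module approximate diagonal for $\mathcal A$.

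The main obstacle is the last implication: one must juggle three interleaved limits (the index $l$, the approximate-identity index $\alpha$, and the convex-hull approximation in the bidual) while tracking how the one-sided actions behave under repeated duality, and verify that the net produced by Goldstine–Mazur inherits both the commutator property and the diagonal property simultaneously. Centrality of $(e_\alpha)$ is essential here; without it the commutator $a\cdot v_{l,\alpha}-v_{l,\alpha}\cdot a$ retains a non-vanishing contribution coming from $T_l^{*}(\widehat{a e_\alpha-e_\alpha a})$ that the $w^{*}$-morphism error cannot absorb. Once the bidual diagonal is in hand, the Goldstine–Mazur passage is routine and parallels Theorem \ref{thmt}.
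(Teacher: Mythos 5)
Your proposal is correct and is essentially the paper's own argument: $S_l$ is obtained by letting the module approximate diagonal act on $a+\mathcal J$ (the paper uses the right action $\widetilde u_l\cdot(a+\mathcal J)$, you the left action $a\cdot\widetilde u_l$, a symmetric choice), $T_l=S_l^{*}$ gives (ii)$\Rightarrow$(iii), and for (iii)$\Rightarrow$(i) the paper likewise works with the bidual elements $T_l^{*}(v_k)$ built from the central approximate identity, uses centrality to kill exactly the middle commutator term you identify, and finishes with the same iterated-limit indexing and the Goldstine--Mazur passage of Theorem \ref{thmt}. The only addition is your explicit check that $\mathcal J$ acts as zero on $\mathcal A\widehat\otimes_{\mathfrak A}\mathcal A$ (well-definedness of $S_l$), a point the paper leaves implicit.
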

\begin{proof} $\text{(i)}\Rightarrow\text{(ii)}$ Suppose that $\{\widetilde{u}_j \}$ is a module
approximate diagonal for $\mathcal A$. Since $\mathcal A\widehat \otimes _{\mathfrak A} \mathcal A$ is commutative $\mathfrak A$-module, the map $S_l: \mathcal A/\mathcal J\longrightarrow\mathcal A\widehat \otimes _{\mathfrak A} \mathcal A;\,a+\mathcal J\mapsto \widetilde{u}_j\cdot(a+\mathcal J)$ satisfies the properties of condition (ii).

 $\text{(ii)}\Rightarrow\text{(iii)}$ The map $T_l$, the dual of $S_l$ satisfies the conditions of assertions (iii).

$\text{(iii)}\Rightarrow\text{(i)}$ Assume that $(v_k)_{k\in \Gamma}$ is a central approximate identity for $\mathcal A/\mathcal J$ and $T_l:(\mathcal A\widehat \otimes _{\mathfrak A} \mathcal A)^*\longrightarrow(\mathcal A/\mathcal J)^*\,\,(l\in\Sigma)$ satisfies the conditions of statement (iii). For each $a\in \mathcal A$ and $\varphi\in(\mathcal A\widehat \otimes _{\mathfrak A} \mathcal A)^*$, we have
\begin{align*}
\lim_k\lim_l\left\langle a\cdot T_l^*(v_k)-T_l^*(v_k)\cdot a, \varphi\right\rangle&=\lim_k\lim_l\left\langle  T_l^*(v_k),\varphi\cdot a-a\cdot\varphi\right\rangle \\
&=\lim_k\lim_l\left\langle T_l(\varphi\cdot a)-T_l(a\cdot\varphi),v_k \right\rangle\\
&=\lim_k\lim_l\left\langle  T_l(\varphi\cdot a)-T_l(\varphi)\cdot a+T_l(\varphi)\cdot a-T_l(a\cdot\varphi),v_k\right\rangle\\
&=\lim_k\lim_l\left\langle  T_l(\varphi\cdot a)-T_l(\varphi)\cdot a,v_k\right\rangle\\
&+\lim_k\lim_l\left\langle a\cdot T_l(\varphi)-T_l(a\cdot\varphi),v_k\right\rangle\\
&=\lim_k(0+0)=0.
\end{align*}
Note that module action $\mathcal A$ over $\mathcal A/\mathcal J$ and the product in $\mathcal A/\mathcal J$ coincide. Also, for each $a\in \mathcal A$ and $f\in(\mathcal A/\mathcal J)^*$, we get
\begin{align*}
\lim_k\lim_l\left\langle \tilde{\omega}_{\mathcal{A}}^{**}(T^*_l(v_k))\cdot (a+\mathcal J), f\right\rangle&=\lim_k\lim_l\left\langle  T_l^*(v_k),\tilde{\omega}_{\mathcal{A}}^{*}((a+\mathcal J)\cdot f)\right\rangle \\
&=\lim_k\lim_l\left\langle  T_l(\tilde{\omega}_{\mathcal{A}}^{*}((a+\mathcal J)\cdot f)),v_k\right\rangle\\
&=\lim_k\left\langle  (a+\mathcal J)\cdot f,v_k\right\rangle\\
&=\lim_k\left\langle f,v_k\cdot(a+\mathcal J)\right\rangle\\
&=\left\langle f,a+\mathcal J\right\rangle.
\end{align*}
Let $\mathfrak E =\Gamma\times\Sigma^{\Gamma}$ be directed by the product ordering and for each $p=(k,(l_{k'}))\in\mathfrak E$, let $n_p=T_{l_k}(v_k)\in (\mathcal A\widehat \otimes _{\mathfrak A} \mathcal A)^{**}$. Using the iterated limit theorem
\cite[page 69]{ke} and the above calculations, we have $a\cdot n_p-n_p\cdot a\rightarrow 0$ weak$^*$ in $(\mathcal A\widehat \otimes _{\mathfrak A} \mathcal A)^{**}$ and $\tilde{\omega}_{\mathcal{A}}^{**}(n_p)\cdot(a+\mathcal J)\rightarrow a+\mathcal J$ weak$^*$ in $(\mathcal A/\mathcal J)^{**}$ for all $a\in \mathcal A$. Now, the same argument in the proof of Theorem \ref{thmt} can be applied to show that $\mathcal A$ is module pseudo-amenable.
\end{proof}

Recall that an {\it inverse semigroup} is a semigroup $S$ such that for each $s\in S$ there is a unique element $s^*\in S$ with $ss^*s=s$ and $s^*ss^*=s^*$. Elements of the form $s^*s$ are called {\it idempotents} and the set of all idempotents is denoted by $E_S$ (or $E$).

Let $S$ be a (discrete) inverse semigroup with with subsemigroup $E$ of idempotents. We say that $S$ is a {\it semilattice} if $S$ is commutative and $E_S=S$. It is easy to see that $E$  a semilattice  \cite[Theorem 5.1.1]{how} with the following order
$$e\leq d\Longleftrightarrow ed=e\qquad (e,d \in E).$$
In particular $ \ell ^{1}(E)$ could be regarded as a subalgebra
of $ \ell ^{1}(S)$. Consequently, $ \ell ^{1}(S)$ is a Banach algebra and a
Banach $ \ell ^{1}(E)$-module with compatible actions \cite{am1}. Consider the actions of $ \ell ^{1}(E)$ on $ \ell
^{1}(S)$ as follows:
$$\delta_e\cdot\delta_s = \delta_s, \ \delta_s\cdot\delta_e = \delta_{se} =
\delta_s * \delta_e \qquad (s \in S,  e \in E).$$

From now on, we assume that $\ell ^1(S)$ is a Banach bimodule over $\ell ^1(E)$ with
the above actions. Let the ideal $\mathcal J$ be the closed linear
span of $\{\delta_{set}-\delta_{st}: \, s,t \in S,  e \in E
\}.$ We consider an equivalence relation on $S$ such that $s\approx t$ if and only if $\delta_s-\delta_t \in \mathcal J$ for all $s,t \in
S.$  It is shown in \cite{pou} that the quotient ${S}/{\approx}$ is a
discrete group (see also \cite{abe}). Indeed,
${S}/{\approx}$ is homomorphic to the maximal group homomorphic
image $G_S$ \cite{mn} of $S$ \cite{pou2}. Moreover, $S$ is
amenable if and only if $G_S={S}/{\approx}$ is amenable
\cite{dun,mn}.

On should remember that a Banach algebra $\mathcal A$ is pseudo-amenable if there is a net $(u_j)\subset{\mathcal A}\widehat
\otimes {\mathcal A}$,
called an approximate diagonal for $\mathcal A$, such that $a\cdot u_j-u_j\cdot a
\stackrel{j}{\longrightarrow}0$ and $\omega_{\mathcal A}(u_j)a \stackrel{j}{\longrightarrow}a$ for all
$a\in \mathcal A$ \cite{gz}.

To achieve our aim, we need the following lemma which is analogous to \cite[Theorem 2.2.15]{bod} for the module pseudo-amenability case. Since the proof is similar, it is omitted.

\begin{lemm}\label{lemma} Let $\mathcal A$ be a Banach $\mathfrak A$-module with trivial left action and $\mathcal A/\mathcal J$ has a right bounded approximate identity. Then $\mathcal A/\mathcal J$ is module pseudo-amenable if and only if it is pseudo-amenable.

\end{lemm}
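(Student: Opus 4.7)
The plan is to show that, under the stated hypotheses, the module projective tensor product $(\mathcal A/\mathcal J)\widehat{\otimes}_{\mathfrak A}(\mathcal A/\mathcal J)$ coincides with the ordinary projective tensor product $(\mathcal A/\mathcal J)\widehat{\otimes}(\mathcal A/\mathcal J)$, and the ideal $\mathcal J_{\mathcal A/\mathcal J}$ vanishes. Once this collapse is established, the defining conditions of module pseudo-amenability for $\mathcal A/\mathcal J$ reduce verbatim to those of ordinary pseudo-amenability, making the equivalence essentially a restatement.

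Set $\mathcal B = \mathcal A/\mathcal J$. The triviality of the left action gives $\alpha\cdot a = f(\alpha)a$ for some $f\in\mathfrak A^*$. Combined with the assumption that $\mathcal B$ has a right bounded approximate identity, Lemma \ref{tpc2} yields $a\cdot\alpha - f(\alpha)a\in\mathcal J$ for every $a\in\mathcal A$ and $\alpha\in\mathfrak A$. Therefore, on $\mathcal B$, both actions of $\mathfrak A$ coincide with scalar multiplication by $f(\alpha)$, namely $\alpha\cdot(a+\mathcal J) = (a+\mathcal J)\cdot\alpha = f(\alpha)(a+\mathcal J)$. This is the key structural observation.

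Given this, a direct inspection of the generators of $\mathcal I_{\mathcal B}$ and $\mathcal J_{\mathcal B}$ shows they are zero:
$$(a+\mathcal J)\cdot\alpha\otimes(b+\mathcal J) - (a+\mathcal J)\otimes\alpha\cdot(b+\mathcal J) = f(\alpha)\bigl[(a+\mathcal J)\otimes(b+\mathcal J)\bigr] - f(\alpha)\bigl[(a+\mathcal J)\otimes(b+\mathcal J)\bigr] = 0,$$
and similarly $((a+\mathcal J)\cdot\alpha)(b+\mathcal J) - (a+\mathcal J)(\alpha\cdot(b+\mathcal J)) = 0$. Consequently $\mathcal B\widehat{\otimes}_{\mathfrak A}\mathcal B = \mathcal B\widehat{\otimes}\mathcal B$, $\mathcal B/\mathcal J_{\mathcal B} = \mathcal B$, and the induced product map $\widetilde{\omega}_{\mathcal B}$ is identified with the ordinary product map $\omega_{\mathcal B}$.

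With these identifications, a module approximate diagonal for $\mathcal B$ is literally an element of $\mathcal B\widehat{\otimes}\mathcal B$ satisfying $\widetilde{u}_j\cdot b - b\cdot\widetilde{u}_j \to 0$ and $\omega_{\mathcal B}(\widetilde{u}_j)$ being an approximate identity for $\mathcal B$; the latter two-sided condition is equivalent, in the presence of the former, to the one-sided condition $\omega_{\mathcal B}(\widetilde{u}_j)b\to b$ appearing in the definition of pseudo-amenability, because $b\,\omega_{\mathcal B}(\widetilde{u}_j) - \omega_{\mathcal B}(\widetilde{u}_j)b = \omega_{\mathcal B}(b\cdot\widetilde{u}_j - \widetilde{u}_j\cdot b)\to 0$. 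The only subtle point, which constitutes the main obstacle, is the use of Lemma \ref{tpc2} to promote the triviality of the left action to the triviality (modulo $\mathcal J$) of the right action; everything else is a bookkeeping verification that the two notions coincide on $\mathcal A/\mathcal J$.
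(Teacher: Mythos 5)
Your argument is correct, and it is the natural proof of this lemma: the paper itself omits the proof, saying only that it is ``similar'' to the module-amenability analogue in \cite[Theorem 2.2.15]{bod}, and for the diagonal-based definition used here the intended argument is exactly the collapse you carry out. The two pillars are sound: Lemma \ref{tpc2} (which applies under precisely the stated hypotheses of trivial left action and a right bounded approximate identity for $\mathcal A/\mathcal J$) makes both $\mathfrak A$-actions on $\mathcal A/\mathcal J$ equal to multiplication by $f(\alpha)$, whence the generators of $\mathcal I_{\mathcal A/\mathcal J}$ and $\mathcal J_{\mathcal A/\mathcal J}$ vanish, so $(\mathcal A/\mathcal J)\widehat{\otimes}_{\mathfrak A}(\mathcal A/\mathcal J)=(\mathcal A/\mathcal J)\widehat{\otimes}(\mathcal A/\mathcal J)$ and $\widetilde{\omega}_{\mathcal A/\mathcal J}=\omega_{\mathcal A/\mathcal J}$. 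You also correctly dispose of the only definitional mismatch, namely that Definition \ref{def2} asks for a (two-sided) approximate identity while pseudo-amenability only asks for $\omega(u_j)a\to a$: the identity $a\,\omega(u_j)-\omega(u_j)a=\omega(a\cdot u_j-u_j\cdot a)\to 0$ shows the two requirements coincide in the presence of the approximate-diagonal condition, so the two notions are literally the same for $\mathcal A/\mathcal J$.
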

\begin{theo}\label{tm} Let $S$ be a discrete inverse semigroup. Then

\begin{enumerate}
\item[$\text{(i)}$] {$\ell ^1(S)$ is module pseudo-amenable if and only if $S$ is amenable;}
\item[$\text{(ii)}$] {$\ell ^1(S)^{**}$ is module pseudo-amenable if and only if $G_S$ is finite.}
\end{enumerate}
\end{theo}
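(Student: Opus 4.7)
The approach is to translate module pseudo-amenability of $\ell^{1}(S)$ (respectively, of $\ell^{1}(S)^{**}$) into ordinary pseudo-amenability of the group algebra $\ell^{1}(G_S)$ (respectively, of $\ell^{1}(G_S)^{**}$), using the identification $\ell^{1}(S)/\mathcal J \cong \ell^{1}(G_S)$. The three main tools are Proposition \ref{p} for quotient descent, Lemma \ref{lemma} for passing from module pseudo-amenability to classical pseudo-amenability, and Theorem \ref{thmt} for bidual descent. Note that $\ell^{1}(E)$ acts trivially on $\ell^{1}(S)$ from the left (with $f(\delta_e)=1$), and $\ell^{1}(G_S)$ has an identity because $G_S$ is a group, so the hypotheses of Lemma \ref{lemma} are automatic in every place they are needed.

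For part (i) forward, I would apply Proposition \ref{p} to the continuous $\ell^{1}(E)$-module epimorphism $\ell^{1}(S)\to \ell^{1}(S)/\mathcal J$ to obtain module pseudo-amenability of the quotient; Lemma \ref{lemma} then upgrades this to classical pseudo-amenability of $\ell^{1}(G_S)$; finally, the group-algebra theorem of Ghahramani and Zhang \cite{gz} forces $G_S$, and hence $S$, to be amenable by \cite{dun,mn}. For the backward direction, $S$ amenable makes $G_S$ amenable, so $\ell^{1}(G_S)$ is amenable by Johnson's theorem and admits a bounded approximate diagonal $(u_{\alpha})\subset \ell^{1}(G_S)\widehat{\otimes}\ell^{1}(G_S)$. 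The plan is to lift $(u_{\alpha})$ termwise along a set-theoretic section $G_S\to S$ to a net $(\widetilde{u}_{\alpha})\subset \ell^{1}(S)\widehat{\otimes}_{\ell^{1}(E)}\ell^{1}(S)$; the condition $\widetilde{\omega}_{\ell^{1}(S)}(\widetilde{u}_{\alpha})\cdot(a+\mathcal J)\to a+\mathcal J$ follows at once by projecting back down to $\ell^{1}(G_S)$, while the commutator condition $a\cdot \widetilde{u}_{\alpha}-\widetilde{u}_{\alpha}\cdot a\to 0$ must be verified using the trivial left action of $\ell^{1}(E)$ together with the explicit descriptions of $\mathcal I$ and $\mathcal J$.

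For part (ii) forward, Theorem \ref{thmt} yields module pseudo-amenability of $\ell^{1}(S)$ from that of $\ell^{1}(S)^{**}$, so $S$ is amenable by part (i). To deduce that $G_S$ is finite, apply Proposition \ref{p} to the second adjoint of the quotient map to get module pseudo-amenability of $(\ell^{1}(S)/\mathcal J)^{**}\cong \ell^{1}(G_S)^{**}$, then Lemma \ref{lemma} to upgrade to classical pseudo-amenability of $\ell^{1}(G_S)^{**}$, and finally invoke the known fact that $\ell^{1}(G)^{**}$ is pseudo-amenable only when $G$ is finite. For the backward direction, $G_S$ finite makes $\ell^{1}(G_S)^{**}=\ell^{1}(G_S)$ finite-dimensional and amenable, which gives a genuine (finite) virtual diagonal in $\ell^{1}(G_S)\otimes \ell^{1}(G_S)$; a lift of this diagonal into $\ell^{1}(S)^{**}\widehat{\otimes}_{\ell^{1}(E)}\ell^{1}(S)^{**}$, parallel to the lift used in (i), should furnish the required module approximate diagonal for $\ell^{1}(S)^{**}$.

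The principal technical obstacle in both parts is the lifting step from $\ell^{1}(G_S)$ back up to $\ell^{1}(S)$ (and from $\ell^{1}(G_S)^{**}$ back up to $\ell^{1}(S)^{**}$): convergence of a lifted net in the module projective tensor product $\ell^{1}(S)\widehat{\otimes}_{\ell^{1}(E)}\ell^{1}(S)$ is not automatic from convergence of the original net in $\ell^{1}(G_S)\widehat{\otimes}\ell^{1}(G_S)$, so the commutator condition of Definition \ref{def2} has to be extracted by a direct computation using the trivial left $\ell^{1}(E)$-action and the explicit generators $\{\delta_{set}-\delta_{st}\}$ of $\mathcal J$.
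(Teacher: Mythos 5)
Your two forward implications are sound and essentially follow the paper's route: quotient descent via Proposition \ref{p} to $\ell^{1}(S)/\mathcal J\cong\ell^{1}(G_S)$ (resp.\ to $\ell^{1}(G_S)^{**}$), Lemma \ref{lemma} to convert module pseudo-amenability into classical pseudo-amenability, and then \cite[Propositions 4.1 and 4.2]{gz}. Two remarks there: your detour through Theorem \ref{thmt} in part (ii) is superfluous for concluding that $G_S$ is finite, and it silently assumes the hypothesis of that theorem, namely that $\ell^{1}(S)\widehat{\otimes}_{\ell^{1}(E)}\ell^{1}(S)$ is a commutative $\ell^{1}(S)$-$\ell^{1}(E)$-module, which you never verify; and your use of $q^{**}$ plus a direct application of Lemma \ref{lemma} to $\ell^{1}(G_S)^{**}$ is a legitimate variant of the paper's argument, which instead introduces the bidual ideal $\mathcal N$ with $\mathcal J\subseteq\mathcal N\subseteq\mathcal J^{\perp\perp}$, passes to $\ell^{1}(S)^{**}/\mathcal N$, and then maps onto $\ell^{1}(S)^{**}/\mathcal J^{\perp\perp}\cong\ell^{1}(G_S)^{**}$; if you keep your shortcut you must check that $q^{**}$ is multiplicative for the first Arens products and an $\ell^{1}(E)$-module map, and that both module actions on $\ell^{1}(G_S)^{**}$ are trivial so that Lemma \ref{lemma} applies with $\mathcal J=0$.

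The genuine gap is in both backward directions. You propose to lift a bounded approximate diagonal of $\ell^{1}(G_S)$ (resp.\ a finite diagonal when $G_S$ is finite) through a set-theoretic section $G_S\to S$ to obtain a module approximate diagonal in $\ell^{1}(S)\widehat{\otimes}_{\ell^{1}(E)}\ell^{1}(S)$ (resp.\ in the bidual version), and you yourself flag the commutator condition $a\cdot\widetilde u_{\alpha}-\widetilde u_{\alpha}\cdot a\to 0$ as an unverified ``technical obstacle.'' This is not a technicality to be deferred: Proposition \ref{p} pushes diagonals down to quotients but gives nothing in the reverse direction, a section of $S\to G_S$ is not a homomorphism, and for a lifted elementary tensor one gets terms such as $\delta_{sx}\otimes\delta_y-\delta_x\otimes\delta_{ys}$ whose smallness modulo $\mathcal I$ is exactly the nontrivial content of the theorem; nothing you have written controls it. The paper bypasses this entirely: for (i) it quotes \cite[Theorem 3.1]{am1} ($S$ amenable implies $\ell^{1}(S)$ is module amenable, hence module pseudo-amenable), and for (ii) it quotes \cite[Theorem 3.4]{abe} ($G_S$ finite implies $\ell^{1}(S)^{**}$ is module amenable, hence module pseudo-amenable). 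As it stands, your backward directions would, if completed, amount to reproving those results; either carry out that construction in full (essentially Amini's invariant-mean/virtual-diagonal argument) or cite them, as the paper does.
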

\begin{proof} (i)
First note that $\ell ^1(G)/\mathcal J\cong {\ell ^{1}}(G_S)$ (see \cite{ra1}) is a commutative Banach $\ell ^{1}(E)$-bimodule. If $\ell ^1(S)$ is module pseudo-amenable, then so is $\ell ^1(G_S)$
by Proposition \ref{p}. Since the group algebra $\ell ^1(G_S)$ has an identity, the module pseudo-amenability of $\ell ^1(G_S)$ is equivalent to its pseudo-amenability by Lemma \ref{lemma}. Now, it follows from \cite[Proposition 4.1]{gz} that  $G_S$ is amenable. Therefore $S$ is amenable by \cite[Theorem 1]{dun}. The converse is clear by \cite[Theorem 3.1]{am1}.

(ii)  If $G_S$ is finite, then $\ell ^1(S)^{**}$ module amenable by \cite[Theorem 3.4]{abe} (see also \cite[Theorem 2.11]{pou}). Thus $\ell ^1(S)^{**}$ is module pseudo-amenable.

Conversely, let $\mathcal N$ be the closed ideal of ${\mathcal
A^{**}}$ generated by $(F\cdot \delta_e )\square G-F \square
(\delta_e \cdot G)$, for $F,G\in\ell ^1(S)^{**}$ and $e\in
E$. Similar to the proof of \cite[Theorem 3.4]{abe} we can show that $\mathcal J\subseteq \mathcal N \subseteq \mathcal J^{\perp \perp}$. If $\ell ^1(S)^{**}$ is module pseudo-amenable, so is $\ell ^1(S)^{**}/\mathcal N$ by Proposition \ref{p}. Going back to the case where $\mathcal
A=\ell^1(S)^{**}$, $\mathcal J=\mathcal N$ and $\mathfrak A=\ell^1(E)$ in Lemma \ref{lemma}. Then $\ell ^1(S)^{**}/\mathcal N$ is pseudo-amenable. The map $\theta : \ell ^1(S)^{**}/\mathcal N\longrightarrow \ell ^1(S)^{**}/\mathcal J^{\perp \perp}$ defined by  $\theta(F+N)= F+ J^{\perp \perp}$ is a well defined continuous
epimorphism. Since homomorphic image of a pseudo-amenable Banach algebra under a continuous epimorphism is again pseudo-amenable \cite[Proposition 2.2]{gz}, $\ell ^{1}(S)^{**}/\mathcal J^{\perp \perp}\cong \ell^1(G_S)^{**}$ is pseudo-amenable. Now, Proposition 4.2 from \cite{gz} shows that $G_S$ is finite.
\end{proof}

 We finish the paper by two examples. First we give an example
for which $\mathcal A$ is module pseudo-amenable but not pseudo-amenable. This example indicates that this new notion of amenability is different from the classical case. The second example shows that  the semigroup algebra on the Brandt
inverse semigroup is module pseudo-amenable (contractible) and pseudo-amenable but not pseudo-contractible.

\begin{ex} Let $\mathcal C$ be the
bicyclic inverse semigroup generated by $p$ and $q$, that is
$$\mathcal C=\{p^mq^n : m,n\geq 0 \},\hspace{0.2cm}(p^mq^n)^*=p^nq^m. $$ 
One can easily check that $ss^*s=s$ and $s^*ss^*=s^*$. The set of idempotents of $\mathcal C$ is $E_{\mathcal
C}=\{p^nq^n : n=0,1,...\}$ which is totally ordered with the
following order
$$p^nq^n \leq p^mq^m \Longleftrightarrow m \leq n.$$
It is shown in \cite{abe} that $\mathcal C/\approx$ is isomorphic
to the group of integers $\mathbb{Z}$, hence $\mathcal C$ is
amenable (see also \cite{dun} and \cite{pou2}). Therefore, by Theorem \ref{tm}, ${\ell ^{1}}(\mathcal C)$ is module pseudo-amenable as an ${\ell
^{1}}(E_{\mathcal C})$-module but ${\ell ^{1}}(\mathcal C)^{**}$
is not. On the other hand, any Banach algebra with a bounded approximate identity is approximate
amenable if and only if it is pseudo-amenable \cite[Proposition 3.2]{gz}. Since ${\ell ^{1}}(\mathcal C)$ is not approximately amenable \cite{gy}, it is not  pseudo-amenable. 
\end{ex}

\begin{ex}
Let $G$ be a group with identity $e$, and let $I$ be a non-empty
set. Then the Brandt inverse semigroup corresponding to $G$ and
$I$, denoted by $S={\mathcal{M}}(G,I)$, is the collection of all
$I \times I$ matrices $(g)_{ij}$ with $g\in G$ in the
$(i,j)^{\textmd{th}}$ place and $0$ (zero) elsewhere and the $I
\times I$ zero matrix $0$. Multiplication in $S$ is given by the
formula
$$(g)_{ij}(h)_{kl}=\left\{\begin{array}{l} (gh)_{il} \qquad {\rm {if}}\
j=k \\ \ \ 0 \qquad \quad \, {\rm{if}}\ j \neq k
\end{array}\right. \qquad\qquad (g,h\in G, \, i,j,k,l\in I),$$ and
${(g)^*_{ij}}=(g^{-1})_{ji}$ and $0^*=0$. The set of all idempotents
is $E_S=\{(e)_{ii}:i\in I\}\bigcup \{0\}$.  It is shown in
\cite[Example 3.2]{pou} that ${\ell ^{1}}(S)$ is module contractible. But if the index set $I$ is
infinite, then ${\ell ^{1}}(S)$ is not pseudo-contractible \cite[Corollary 2.5]{erm}. It is well-known that in the case where $S={\mathcal{M}}(G,I)$, we have $G=G_S$. Since pseudo-amenability of ${\ell ^{1}}(S)$ is equivalent to the amenability of $G$ \cite{erp} and $G_S$ is the trivial group \cite{pou}, two concepts of  module pseudo-amenability and  pseudo-amenability on ${\ell ^{1}}(S)$ coincide.  \end{ex}

\section*{Acknowledgement}
The authors sincerely thank the anonymous reviewers for their
careful reading, constructive comments and fruitful suggestions
to improve the quality of the first draft. 


\end{document}